\documentclass[11pt,reqno]{amsart}

\usepackage{amsfonts, amsthm, amsmath}
\allowdisplaybreaks[4]

\usepackage{rotating}

\usepackage{tikz}

\usepackage{graphics}

\usepackage{amssymb}

\usepackage{circledsteps}

\usepackage{amsmath}

\usepackage{amscd}

\usepackage[latin2]{inputenc}

\usepackage{t1enc}

\usepackage[mathscr]{eucal}

\usepackage{indentfirst}

\usepackage{graphicx}

\usepackage{graphics}

\usepackage{pict2e}

\usepackage{mathrsfs}

\usepackage{enumerate}

\usepackage[pagebackref]{hyperref}
\hypersetup{colorlinks=true}
\usepackage{cite}
\usepackage{color}
\usepackage{epic}
\usepackage{hyperref} 
\usepackage{framed}
\usepackage{mathabx}

\numberwithin{equation}{section}
\theoremstyle{plain}

\newtheorem{theorem}{Theorem}[section]

\newtheorem{lemma}[theorem]{Lemma}

\newtheorem{corollary}[theorem]{Corollary}

\newtheorem{proposition}[theorem]{Proposition}

\theoremstyle{definition}

\newtheorem{conj}[theorem]{Conjecture}

\newtheorem{remark}[theorem]{Remark}

\newtheorem{?}[theorem]{Problem}

\newtheoremstyle{named}{}{}{\itshape}{}{\bfseries}{.}{.5em}{#1\thmnote{ #3}}
\theoremstyle{named}

\newcommand{\f}[1]{\ifthenelse{\equal{#1}{1}}{(q;q)_\infty}{(q^{#1};q^{#1})_{\infty}}}

\def\bZ{\mathbb{Z}}

\def\bC{\mathbb{C}}
\def\bN{\mathbb{N}}
\def\cp{\mathrm{cp}}
\def\lcm{\mathrm{lcm}}

\def\la{\lambda}

\begin{document}
\title[Further results on non-negativity conjectures for copartition products]{Further results on non-negativity conjectures for copartition products}

\author[H. Li]{Haijun Li}
\address[Haijun Li]{College of Mathematics and Statistics, Chongqing University, Chongqing 401331, P.R. China}
\email{lihaijun@cqu.edu.cn; lihaijune@163.com}

\date{\today}

\begin{abstract}
Burson and Eichhorn proposed infinite and finite coefficientwise non-negativity conjectures for products arising from copartitions. We obtain several further results. First, both products admit decompositions into elementary local blocks, and the finite conjecture reduces exactly to its diagonal specialization. The finite conjecture is proved when the first truncation parameter is one or when the two residue parameters are equal. We also prove that every finite product has non-negative coefficients below the third shifted signed exponent. For fixed finite parameters, the coefficient sequence is shown to be eventually quasipolynomial; an explicit period bound, an explicit onset for the quasipolynomial formula, and the leading term on every residue class are obtained. This yields eventual strict positivity, apart from forced odd-degree zeros in one solved diagonal case. For the infinite conjecture, we prove non-negativity under the broader condition $a\equiv b\pmod m$, which includes the known case $a=b$. Finally, for arbitrary $b\mid a$, we improve the uniform initial non-negative range from degrees below $a+m$ to degrees below $a+2m$.
\end{abstract}

\keywords{Copartitions, coefficientwise non-negativity, finite and infinite products, quasipolynomials, roots of unity.
\newline \indent 2020 {\it Mathematics Subject Classification}. 05A17, 05A20, 05A30, 33D15.}

\maketitle
\section{Introduction}\label{sec:intro}

Throughout this paper, we adopt the following standard $q$-series notation; see, for example,~\cite{GasperRahman}.
\begin{align*}
&(a; q)_n := \prod_{k=0}^{n-1} (1 - aq^k),\quad  (a; q)_\infty := \prod_{k=0}^{\infty} (1 - aq^k), \quad |q| < 1,\\
&(a_1, \ldots, a_m; q)_n := (a_1; q)_n \cdots (a_m; q)_n, \qquad n \in \mathbb{N}_0 \cup \{\infty\},
\end{align*}
where we denote the set of positive integers as $\bN$ and define $\bN_0=\bN\cup\{0\}$. 

For a given integer $n\in \bN$, a {\it partition} $\la$ of $n$ is a non-increasing list of positive integer that sum up to $n$. Standard references for partitions include \cite{AndrewsBook}. There has been extensive research on various generalizations of ordinary integer partitions, such as overpartitions~\cite{CorteelLovejoy}, colored partitions~\cite{AlladiGordon}, cylindric partitions~\cite{Borodin, BridgesUncu}, and others. On the other hand, researchers have investigated the non-negativity (or positivity) of a large number of $q$-series sequences; see, for example, \cite{Liu, Stanton, Zhou} and the references therein.

In \cite{BursonEichhornCopartitions}, Burson and Eichhorn introduced $(a, b, m)$-copartitions which reframe and generalize Andrews' $\mathcal{E}\mathcal{O}^{*}$-type partitions~\cite{AndrewsEO}. For $a, b, m\in \bN$, each $(a, b, m)$-copartition is comprised of three partitions: (i) a partition into parts $\equiv a\ (\mathrm{mod}\ m)$, which we call the ground; (ii) a partition into parts $\equiv b\ (\mathrm{mod}\ m)$, which we call the sky; and (iii) a rectangular partition that unites them. We denote the number of $(a, b, m)$-copartitions of $n$ as $\cp_{a, b, m}(n)$.

In \cite{BursonEichhornPositivity}, Burson and Eichhorn proved that the product 
\begin{equation}\label{eq:F-definition}
F_{a,b,m}(q):=\frac{(-q^{a+b};q^m)_\infty}
{(-q^a;q^m)_\infty(q^b;q^m)_\infty}
\end{equation}
is the generating function for the difference between the numbers of $(a, b, m)$-copartitions having an even and an odd number of ground parts. More precisely, let $\cp^o_{a, b, m}(n)$ (resp. $\cp^e_{a, b, m}(n)$) be the number of $(a, b, m)$-copartitions of $n$ with an odd (resp. even) number of ground parts, then
\[
F_{a,b,m}(q)
=
\sum_{n\geq0}
\bigl(\cp^{e}_{a,b,m}(n)
-
\cp^{o}_{a,b,m}(n)\bigr)q^n.
\]
This led them to the following conjecture.

\begin{conj}[{c.f. \cite[Conjecture 5.1]{BursonEichhornPositivity}}]\label{conj:infinite}
For $a,b,m\in\bN$, if $b\mid a$, then
\[
F_{a,b,m}(q)\in\bN_0[[q]].
\]
\end{conj}
Burson and Eichhorn proved Conjecture~\ref{conj:infinite} when $a=b$ \cite[Theorem 5.2]{BursonEichhornPositivity}. Craig's non-negativity theorem for $(q,-q^3;q^4)_\infty^{-1}$ implies the case $(a,b,m)=(3,1,4)$ after multiplication by $(-q^4;q^4)_\infty$; see \cite{Craig}. 

On the other hand, the finite product introduced in \cite{BursonEichhornPositivity} is
\begin{equation}\label{eq:g-definition}
g_{a,b,m}(N,M;q)
:=
\frac{(-q^m;q^m)_{N+M-1}}
{(-q^a;q^m)_N(q^b;q^m)_M}.
\end{equation}
Regarding this finite product, Burson and Eichhorn also proposed a conjecture on the non-negativity of its coefficients.

\begin{conj}[{c.f. \cite[Conjecture 5.3]{BursonEichhornPositivity}}]\label{conj:finite}
For $a, b, m, N, M\in \bN$, if $b\mid a$, $a+b=m$ and $N\leq M$, then 
$$
g_{a, b, m}(N, M, q)\in \bN_0[[q]].
$$
\end{conj}

We note that neither a complete proof nor a counterexample is available for Conjectures~\ref{conj:infinite} and \ref{conj:finite}. The purpose of this paper is to develop the following partial results. For the finite product \eqref{eq:g-definition}, we set $a=kb$ and $x=q^b$ for $k\in \bN$. Then we have $m=(k+1)b$, and 
\begin{equation}\label{eq:G-intro}
G_{k,N,M}(x)
:=
g_{a,b,a+b}(N,M;q)
=
\frac{(-x^{k+1};x^{k+1})_{N+M-1}}
{(-x^{k};x^{k+1})_N(x;x^{k+1})_M}.
\end{equation}
Moreover, we define
\[
D_{k,N}(x):=G_{k,N,N}(x).
\]
Our first result is that, for determining the non-negativity of the coefficients of $G_{k,N,M}(x)$, it suffices to consider the case $N=M$, namely $D_{k,N}(x)$.

\begin{theorem}\label{thm:diagonal-intro}
For $k, N, M\in \bN$ and $N\leq M$, we have
\begin{equation}\label{eq:diagonal-factor-intro}
G_{k,N,M}(x)
=
D_{k,N}(x)
\prod_{j=N}^{M-1}
\frac{1+x^{(k+1)(N+j)}}{1-x^{(k+1)j+1}}.
\end{equation}
Consequently, Conjecture~\ref{conj:finite} is equivalent to
\[
D_{k, N}(x)\in\bN_0[[x]].
\]
\end{theorem}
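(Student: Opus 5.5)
The identity \eqref{eq:diagonal-factor-intro} is a direct telescoping computation, so I will form the ratio $G_{k,N,M}(x)/D_{k,N}(x)$ from the definition \eqref{eq:G-intro} and simplify. The denominator factor $(-x^k;x^{k+1})_N$ is common to both and cancels. In the numerators,
\[
\frac{(-x^{k+1};x^{k+1})_{N+M-1}}{(-x^{k+1};x^{k+1})_{2N-1}}=\prod_{i=2N}^{N+M-1}\bigl(1+x^{(k+1)i}\bigr)=\prod_{j=N}^{M-1}\bigl(1+x^{(k+1)(N+j)}\bigr),
\]
after the substitution $i=N+j$. This uses $(a;q)_n=\prod_{i=0}^{n-1}(1-aq^i)$ with $a=-x^{k+1}$, $q=x^{k+1}$, so the $i$-th factor is $1+x^{(k+1)(i+1)}$, and the surviving factors are those with exponents $(k+1)\cdot 2N,\dots,(k+1)(N+M-1)$. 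In the denominators,
\[
\frac{(x;x^{k+1})_M}{(x;x^{k+1})_N}=\prod_{j=N}^{M-1}\bigl(1-x^{(k+1)j+1}\bigr).
\]
Combining the two displays gives \eqref{eq:diagonal-factor-intro}. The only care needed is the index bookkeeping: both products have exactly $M-N$ factors and the exponents line up as stated.

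For the equivalence, I first translate Conjecture~\ref{conj:finite}. Under $b\mid a$ and $a+b=m$ we have $a=kb$ and $m=(k+1)b$, and $g_{a,b,m}(N,M;q)=G_{k,N,M}(q^b)$. The map $x\mapsto q^b$ only spreads out the coefficients, so $g\in\bN_0[[q]]$ if and only if $G_{k,N,M}\in\bN_0[[x]]$. Conjecture~\ref{conj:finite} is therefore equivalent to $G_{k,N,M}\in\bN_0[[x]]$ for all $k\ge1$ and $N\le M$.

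One direction of the equivalence is the special case $M=N$. For the other, assume $D_{k,N}\in\bN_0[[x]]$. Each factor $1+x^{(k+1)(N+j)}$ is a polynomial with non-negative coefficients. Each factor $1/(1-x^{(k+1)j+1})=\sum_{r\ge0}x^{r((k+1)j+1)}$ has non-negative coefficients, and its exponent is positive since $j\ge N\ge1$. A product of finitely many power series with non-negative coefficients again has non-negative coefficients, so $G_{k,N,M}\in\bN_0[[x]]$. No step here is a real obstacle; the only place to be careful is the index shift in the numerator.
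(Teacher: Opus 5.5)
Your proposal is correct and takes essentially the same route as the paper. Both arguments split off the factors of $D_{k,N}$ from $G_{k,N,M}$, reindex the leftover numerator factors by $r=N+j$, use non-negativity of the extra factors for one direction, and use the specialization $M=N$ for the other; your remark that $g_{a,b,a+b}(N,M;q)=G_{k,N,M}(q^b)$ preserves non-negativity in both directions just makes explicit a translation the paper leaves implicit.
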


The next result is to show that, over a determinable initial range, the coefficients of the finite product \eqref{eq:g-definition} under the hypotheses of Conjecture \ref{conj:finite} are non-nagative.

\begin{theorem}\label{thm:finite-initial-intro}
Under the hypotheses of Conjecture~\ref{conj:finite}, we have
\[
[q^n]g_{a,b,a+b}(N,M;q)\geq0
\qquad
(0\leq n<a+2(a+b)).
\]
Equivalently,
\[
[x^r]G_{k,N,M}(x)\geq0
\qquad
(0\leq r<3k+2).
\]
\end{theorem}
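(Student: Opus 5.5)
By Theorem~\ref{thm:diagonal-intro}, $G_{k,N,M}(x)=D_{k,N}(x)\,R(x)$, where $R(x)=\prod_{j=N}^{M-1}(1+x^{(k+1)(N+j)})/(1-x^{(k+1)j+1})$ lies in $\bN_0[[x]]$ and has constant term $1$. If the coefficients of $D_{k,N}$ in degrees $<3k+2$ are non-negative, then so are those of $G_{k,N,M}$ in the same range, because each of these coefficients of the product only involves coefficients of $D_{k,N}$ of degree at most $3k+1$. So I will work only with the diagonal product, and only modulo $x^{3k+2}$. The statement in terms of $q$ is the same as the statement in $x=q^b$, since $a+2(a+b)=b(3k+2)$ and $g$ is a power series in $q^b$.

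The key algebraic step is to isolate the block
\[
A(x):=\frac{1+x^{k+1}}{(1+x^k)(1-x)}=\frac{1}{1-x}-\frac{x^k}{1+x^k},
\]
which comes from $(1+x^{k+1})/(1+x^k)=1-x^k(1-x)/(1+x^k)$. Writing $A=\sum a_nx^n$, we get $a_n=1$ for $0\le n<k$, $a_k=0$, and $a_n\in\{0,1,2\}$ in general. In particular $D_{k,1}=A\in\bN_0[[x]]$, which settles $N=1$ in all degrees. For $N\ge2$ I reduce the remaining factors of $D_{k,N}$ modulo $x^{3k+2}$. The numerator contributes $1+x^{2k+2}$, since the next factor has degree $3k+3$. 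The factor $1/(1+x^{2k+1})$ is congruent to $1-x^{2k+1}$. All other denominator factors are of the form $1/(1-x^{(k+1)i+1})$, and they multiply to a series $P\in\bN_0[[x]]$ with $P=1+x^{k+2}+\cdots$. Hence
\[
D_{k,N}(x)\equiv C(x)\,P(x)\pmod{x^{3k+2}},\qquad C(x):=A(x)(1-x^{2k+1})(1+x^{2k+2}),
\]
and it suffices to show that the coefficients of $C$ are non-negative below $3k+2$, except in cases where $P$ compensates.

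The coefficients of $C$ are $c_n=a_n-a_{n-2k-1}+a_{n-2k-2}$ for $n<3k+2$ (the $x^{4k+3}$ term is irrelevant). For $n\le 2k$ we have $c_n=a_n\ge0$. For $2k+2\le n\le 3k+1$, the index $n-2k-2$ lies in $[0,k-1]$, so $a_{n-2k-2}=1\ge a_{n-2k-1}$ and hence $c_n\ge a_n\ge0$. The only delicate degree is $n=2k+1$, where $c_{2k+1}=a_{2k+1}-1$. For $k\ge2$ the degree $2k+1$ is not a multiple of $k$ and exceeds $k$, so $a_{2k+1}=1$ and $c_{2k+1}=0$.

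The main obstacle I expect is this boundary degree when $k=1$. There $a_3=0$, so $c_3=-1$, and one must use the $x^{k+2}=x^3$ term of $P$ (present as soon as $N\ge2$). This gives $[x^3]D_{1,N}=c_3+c_0=0$. For $k=1$ I would therefore simply expand $D_{1,N}$ modulo $x^5$ by hand for $N=2$ and for $N\ge3$ (which agree modulo $x^5$ apart from factors $1/(1-x^{2i+1})$, all nonnegative), and check the five coefficients directly. I will also double-check the reduction for $N=2$ versus $N\ge3$, since only the set of factors absorbed into $P$ differs and $P$ stays non-negative either way.
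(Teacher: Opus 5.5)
Your proposal is correct and follows the same overall strategy as the paper. Both isolate the zeroth block $D_{k,1}$ and note that modulo $x^{3k+2}$ the signed tail collapses to $1-x^{2k+1}$ (the paper's $1-q^{A_1}$ modulo $q^{A_2}$). Both then show that the shifted negative copy is dominated by positive contributions.

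Where you differ is in which contribution does the dominating. For $0\le r\le k$, the paper bounds the coefficient in degree $2k+1+r$ from below by two disjoint terms. One is a copy of the ordinary part $x^{k+2}$, giving $\ell_k(r+k-1)$; the other is the numerator term $x^{2k+2}$, giving $\ell_k(r-1)$. It then invokes the inequality $\ell_k(r+k-1)+\ell_k(r-1)\ge\ell_k(r)$ of Lemma~\ref{lem:ell-values}, which holds uniformly in $k$.

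You instead fold $1+x^{2k+2}$ into $C$ and use the block's own coefficient $a_{2k+1+r}$ (constant terms from everything else). That is, you use the inequality $a_{2k+1+r}+a_{r-1}\ge a_r$. This is an immediate explicit computation, and it shows that for $k\ge2$ the factor $1/(1-x^{k+2})$ is not even needed. It fails exactly at $k=1$, $r=0$, where you must fall back on the $x^{k+2}$ term of $P$, which is precisely the paper's compensating term.

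Your $k=1$ check is already complete as written. Since $P\equiv1+x^3\pmod{x^5}$, one gets $[x^3]D_{1,N}=c_3+c_0=0$ and $[x^4]D_{1,N}=c_4+c_1=3$. In any case the whole $k=1$ family is covered by Theorem~\ref{thm:equal-finite-intro}.

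Your preliminary reduction to the diagonal via Theorem~\ref{thm:diagonal-intro} is valid but not needed. The paper handles $M>N$ directly, since the extra factors simply sit inside the non-negative series $H_{N,M}$.
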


For fixed finite parameters in $G_{k, N, M}(x)$, we can observe the behavior of the coefficients more precisely.

\begin{theorem}\label{thm:quasipoly-intro}
For fixed $k, N, M\in\bN$ with $N\leq M$, we write
\[
G_{k,N,M}(x)=\sum_{n\geq0}c_{k,N,M}(n)x^n,
\]
and define
\[
\Delta_{k,N,M}:=(k+1)N(M-1)+N-M
\]
and
\[
\Lambda_{k,N,M}
:=
\lcm\Bigl(
\left\{(k+1)j+1:0\leq j<M\right\}
\cup
\{2(k+1)i-2:1\leq i\leq N\}
\Bigr).
\]
There exist polynomials
\[
Q_0(X),Q_1(X),\ldots,Q_{\Lambda_{k,N,M}-1}(X)
\in\mathbb{Q}[X]
\]
of degree at most $M-1$ such that
\[
c_{k,N,M}(n)=Q_r(n)
\]
whenever
\[
n>\Delta_{k,N,M}
\qquad\text{and}\qquad
n\equiv r\pmod{\Lambda_{k,N,M}}.
\]

\begin{itemize}
\item If $M>N$, every $Q_r$ has leading coefficient
\[
\frac{2^{M-1}}
{(M-1)!\prod_{j=0}^{M-1}((k+1)j+1)}.
\]

\item If $M=N\geq2$ and $k$ is even, every $Q_r$ has leading coefficient
\[
\frac{2^{N-1}}
{(N-1)!\prod_{j=0}^{N-1}((k+1)j+1)}.
\]

\item If $M=N\geq2$ and $k>1$ is odd, the leading coefficient in parity class $r$ is
\[
\frac{1}{(N-1)!}
\left(
\frac{2^{N-1}}{\prod_{j=0}^{N-1}((k+1)j+1)}
+
(-1)^r
\frac{2^{N-1}}{\prod_{i=1}^{N}((k+1)i-1)}
\right),
\]
and this number is positive for both parities.
\end{itemize}
Consequently, every fixed finite product is eventually strictly positive, except that when $M=N$ and $k=1$, every odd-degree coefficient is zero and every even-degree coefficient is positive.
\end{theorem}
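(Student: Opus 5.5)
The plan is to treat $G_{k,N,M}(x)$ as a rational function and read off its coefficients from a partial fraction decomposition. First I will rewrite each factor $1/(1+x^{e})$ as $(1-x^{e})/(1-x^{2e})$. The denominator factors are $1+x^{(k+1)i-1}$ for $1\le i\le N$ and $1-x^{(k+1)j+1}$ for $0\le j<M$, so every pole of $G_{k,N,M}$ is then a root of unity whose order divides $\Lambda_{k,N,M}$.

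A direct degree count gives $\Delta_{k,N,M}$ as the degree of the numerator $(-x^{k+1};x^{k+1})_{N+M-1}$ minus the degree of the denominator:
\[
\tfrac{k+1}{2}\bigl[(N+M)(N+M-1)-N(N+1)-M(M-1)\bigr]+N-M=(k+1)N(M-1)+N-M .
\]
By the standard fact about $P/Q$ with $\deg P-\deg Q=\Delta$, for $n>\Delta$ we have
\[
c_{k,N,M}(n)=\sum_{\zeta}\zeta^{-n}R_\zeta(n),
\]
where $\zeta$ runs over the poles and $R_\zeta$ is a polynomial of degree less than the pole order $\mu_\zeta$. Since every $\zeta^{-n}$ depends only on $n \bmod \Lambda_{k,N,M}$, this gives the quasipolynomials $Q_r$. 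It remains to show that $\mu_\zeta\le M$ for all $\zeta$, with equality only at $\zeta=1$, and at $\zeta=-1$ when $M=N$ and $k$ is odd.

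\textbf{The dominant poles.} At $x=1$ only the $M$ factors $1-x^{(k+1)j+1}$ vanish. The local expansion is
\[
G\sim \frac{2^{N+M-1}}{2^N\prod_{j=0}^{M-1}((k+1)j+1)}(1-x)^{-M},
\]
which contributes $\frac{2^{M-1}}{(M-1)!\prod_j((k+1)j+1)}\,n^{M-1}$, the claimed leading coefficient.

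At $x=-1$ I count vanishing factors by parity.
\begin{itemize}
\item If $k$ is even, the order is $\lfloor N/2\rfloor+\lfloor M/2\rfloor-\lceil (N+M-1)/2\rceil\le M-1$, so there is no contribution at top degree.
\item If $k$ is odd, all $N$ factors $1+x^{(k+1)i-1}$ vanish and no other factor vanishes, so the order is $N$. This is at most $M-1$ if $M>N$, and equals $M$ if $M=N$.
\end{itemize}
In the case $M=N$, $k$ odd, write $1+x^{e}=1-(-x)^e\approx e(1+x)$ for odd $e$. The local expansion is then $\frac{2^{N-1}}{\prod_i((k+1)i-1)}(1+x)^{-N}$, which contributes $(-1)^n n^{N-1}/(N-1)!$ times that constant. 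This gives the stated parity-dependent leading coefficient.

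For $k>1$, termwise $(k+1)i-1>(k+1)(i-1)+1$, so the second product exceeds the first and both parity coefficients are positive. For $k=1$ the two products coincide, so the leading odd coefficient cancels. More strongly, $x\mapsto -x$ swaps $1+x^{2i-1}$ and $1-x^{2i-1}$ and fixes the numerator, so $G_{1,N,N}$ is even and all its odd coefficients vanish.

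\textbf{The main obstacle.} I expect the hardest step to be the bound $\mu_\zeta\le M-1$ for primitive $d$-th roots of unity $\zeta$ with $d\ge3$. Zeros among the factors $1-x^{(k+1)j+1}$ require $\gcd(d,k+1)=1$, and then $j$ lies in a single residue class mod $d$, giving at most $\lceil M/d\rceil$ such factors. Zeros among the factors $1+x^{(k+1)i-1}$ require $d$ even and $(k+1)i\equiv 1+d/2\pmod d$, again one residue class. Against these I will subtract the numerator zeros $1+\zeta^{(k+1)s}=0$, $1\le s\le N+M-1$; whenever $\gcd(d,k+1)$ permits, these also form a residue class and cancel most of the denominator zeros. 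I would handle the cases $\gcd(d,k+1)=1$ and $\gcd(d,k+1)>1$ separately, and check $M=1,2$ directly, with $M=1$ forcing $N=1$. With that bound, the top-degree term of $Q_r$ comes only from $\zeta=\pm1$, and its coefficient is positive in every case except the odd classes for $k=1$, $M=N$. Hence $c_{k,N,M}(n)>0$ for all large $n$, which gives the final eventual positivity claim, with the stated exception.
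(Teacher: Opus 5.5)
Your route is the paper's: partial fractions with polynomial part of degree $\Delta_{k,N,M}$, and pole orders at a primitive $d$th root of unity counted as vanishing signed plus vanishing ordinary minus vanishing numerator factors, with a split on $\gcd(d,k+1)$. After that come the local constants at $x=\pm1$ and the termwise comparison $(k+1)(i-1)+1<(k+1)i-1$.

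The step you flag as the obstacle is only outlined, and ``cancel most of the denominator zeros'' is not yet a bound. Take the case $\gcd(d,k+1)=1$ with $d$ even. Plain residue-class counting there gives only $\lceil N/d\rceil+\lceil M/d\rceil-\lfloor (N+M-1)/d\rfloor\le 2$, which does not give order $\le M-1$ when $M=2$.

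The paper closes this with a pairing argument. Let $I$ and $J$ be the index sets of vanishing signed and ordinary factors. For $i\in I$ and $j\in J$ one has $1\le i+j\le N+M-1$ and $(k+1)(i+j)\equiv d/2\pmod d$, so $i+j$ indexes a vanishing numerator factor. Since $I$ and $J$ are progressions with common difference $d$, their sumset has at least $|I|+|J|-1$ elements, so the order is at most $1$.

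When $h=\gcd(d,k+1)>1$, no ordinary factor vanishes, and signed and numerator zeros cannot both occur. The signed zeros then form one class modulo $d/h\ge2$ (not modulo $d$), so the order is at most $\lceil N/2\rceil$.

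The genuine problem is $M=1$. The ``direct check'' you propose cannot succeed, because both the bound and the conclusion are false there.

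\begin{itemize}
\item Take $(N,M)=(1,1)$ with $k\ge2$. Every root of $1+x^k$ is a simple pole of $G_{k,1,1}=D_{k,1}$, since $1+x^{k+1}$ does not vanish at such roots. The non-real roots have $d\ge3$ and $\mu_\zeta=1=M$, so the bound fails.
\item Worse, Lemma~\ref{lem:ell-values} gives $[x^{(2s+1)k}]D_{k,1}(x)=0$ for all $s\ge0$. So $D_{k,1}$ is not eventually strictly positive, although $M=N$ and $k\neq1$.
\end{itemize}

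Your final claim, like the last sentence of the statement, must therefore exclude $(N,M)=(1,1)$ with $k\ge2$; only non-negativity (Theorem~\ref{thm:N-one-intro}) holds there. The paper's proof has the same hole. Lemma~\ref{lem:maximal-poles} excludes $(1,1)$, and the appeal to Theorem~\ref{thm:N-one-intro} yields non-negativity, not positivity.

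For $M\ge2$ your plan, completed as above, works. One smaller point: for $k=1$, $M=N$ it only gives even coefficients positive eventually. The paper gets positivity in every even degree from \eqref{eq:d-two-positive-factorization}.
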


On the other hand, for infinite product \eqref{eq:F-definition} we next give a complete infinite-product subfamily. It strictly includes the known case $a=b$.

\begin{theorem}\label{thm:shift-aligned-intro}
Let $a, b, m\in\bN$ with $a\geq b$. If $a\equiv b\pmod m$, then $F_{a,b,m}(q)\in\bN_0[[q]]$. More precisely, if $a=b+tm$, then
\begin{equation}\label{eq:shift-factor-intro}
F_{a,b,m}(q)
=
\frac{(-q^{a+b};q^m)_\infty}
{(q^b;q^m)_t(q^{2a};q^{2m})_\infty}\in\bN_0[[q]].
\end{equation}
\end{theorem}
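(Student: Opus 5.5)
The plan is to establish the identity \eqref{eq:shift-factor-intro} by direct manipulation of infinite products, and then read off non-negativity from the factored form. Write $a=b+tm$ with $t\in\bN_0$. The first step is to split the sky factor as
\[
(q^b;q^m)_\infty=(q^b;q^m)_t\,(q^{b+tm};q^m)_\infty=(q^b;q^m)_t\,(q^{a};q^m)_\infty .
\]
The residue condition $a\equiv b\pmod m$ is used exactly here: the tail of the sky product is a shifted copy of the ground product. Combining this tail with the ground factor by the elementary identity $(-z;p)_\infty(z;p)_\infty=(z^2;p^2)_\infty$, taken with $z=q^a$ and $p=q^m$, gives
\[
(-q^a;q^m)_\infty(q^b;q^m)_\infty=(q^b;q^m)_t\,(q^{2a};q^{2m})_\infty .
\]
Substituting this into \eqref{eq:F-definition} yields \eqref{eq:shift-factor-intro}.

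Non-negativity then follows factor by factor. The numerator $(-q^{a+b};q^m)_\infty$ is a product of binomials $1+q^{a+b+jm}$, so its coefficients lie in $\bN_0$. The reciprocal $1/(q^b;q^m)_t$ is a finite product of geometric series $\sum_{s\ge0}q^{s(b+jm)}$ for $0\le j<t$; when $t=0$ it equals $1$. Likewise $1/(q^{2a};q^{2m})_\infty=\prod_{j\ge0}\sum_{s\ge0}q^{2s(a+jm)}$. Since a product of power series with coefficients in $\bN_0$ again has coefficients in $\bN_0$, we get $F_{a,b,m}(q)\in\bN_0[[q]]$.

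All exponents $b+jm$ and $2(a+jm)$ are positive, so every infinite product here converges in the $q$-adic sense. The hypothesis $a\ge b$ guarantees that $t\ge0$. When $t=0$ the identity specializes to the known case $a=b$, which is a useful consistency check. There is no real obstacle in this argument. The only point needing care is the index bookkeeping in the split $(q^b;q^m)_\infty=(q^b;q^m)_t(q^{b+tm};q^m)_\infty$, and this is where the congruence $a\equiv b\pmod m$ enters.
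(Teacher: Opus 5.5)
Your proposal is correct and follows the paper's proof essentially step for step. You split off $(q^b;q^m)_t$ so that the tail becomes $(q^a;q^m)_\infty$, merge it with $(-q^a;q^m)_\infty$ via $(z;p)_\infty(-z;p)_\infty=(z^2;p^2)_\infty$, and then read off non-negativity factor by factor.
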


For the remaining infinite products, we can improve the guaranteed initial range.

\begin{theorem}\label{thm:infinite-initial-intro}
For $a, b, m\in \bN$, if $b\mid a$, then
\[
[q^n]F_{a,b,m}(q)\geq0
\qquad
(0\leq n<a+2m).
\]
\end{theorem}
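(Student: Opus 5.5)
The plan is to work modulo $q^{a+2m}$ and collect all negative contributions into a few explicit terms. Each such term will then be matched by an injection of partitions into parts $\equiv b\pmod m$. We write $P(q):=1/(q^b;q^m)_\infty=\sum_{n}p(n)q^n$, where $p(n)$ counts partitions of $n$ into parts $\equiv b\pmod m$, and $a=\ell b$ with $\ell\in\bN$.

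\textbf{Step 1 (truncation and sign-free rewriting).} We use $\frac{1}{1+q^{c}}=\frac{1-q^{c}}{1-q^{2c}}$ for every factor of $(-q^a;q^m)_\infty$. This gives
\[
F_{a,b,m}(q)=(-q^{a+b};q^m)_\infty\,\frac{1}{(q^{2a};q^{2m})_\infty}\,(q^a;q^m)_\infty\,P(q).
\]
The first two factors lie in $\bN_0[[q]]$. Modulo $q^{a+2m}$, the product $(q^a;q^m)_\infty$ reduces to $(1-q^a)(1-q^{a+m})$, since $2a+m\ge a+2m$ fails only when $a<m$, and the cross term $q^{2a+m}$ must be tracked in that case. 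Similarly $(-q^{a+b};q^m)_\infty$ reduces to $1+q^{a+b}+q^{a+b+m}$, with products of two numerator terms lying beyond the range.

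\textbf{Step 2 (the first difference is non-negative everywhere).} We set $C(q):=(1-q^a)P(q)=\sum c(n)q^n$. Then $c(n)=p(n)-p(n-a)\ge 0$, because adding $\ell$ parts equal to $b$ maps partitions of $n-a$ injectively into partitions of $n$. So $c(n)$ counts partitions of $n$ having fewer than $\ell$ parts equal to $b$.

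\textbf{Step 3 (the delicate term).} It remains to control
\[
(1+q^{a+b}+q^{a+b+m})(1-q^{a+m})C(q)
\]
together with the non-negative factor $1/(q^{2a};q^{2m})_\infty$ and, if $a<m$, the extra $q^{2a}$-type terms. The coefficient of $q^n$ with $n<a+2m$ equals
\[
c(n)+c(n-a-b)+c(n-a-b-m)-c(n-a-m)-c(n-2a-b-m)-\cdots .
\]
Here $s=n-a-m<m$, so the partitions counted by $c(s)$ consist only of copies of $b$. Such a partition has the form $s=jb$ with $0\le j<\ell$, and this requires $b<m$; if $b\ge m$ only $s=0$ occurs. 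We compensate each negative contribution separately.
\begin{itemize}
\item For $j=0$, the partition $\{b+m,\ b^{\ell-1}\}$ of $a+m$ has fewer than $\ell$ copies of $b$. Hence it is counted by $c(n)$.
\item For $j\ge1$, we use the positive term $c(n-a-b-m)=c((j-1)b)$, which equals $1$.
\end{itemize}
More generally, we construct an injection from the partitions counted by $c(n-a-m)$ into the disjoint union of those counted by $c(n)$ and $c(n-a-b-m)$. We must check that this injection does not reuse partitions already needed to absorb the other negative term $c(n-2a-b-m)$ and the $a<m$ cross terms. This is where a small case split on whether $a<m$, $b<m$ is needed.

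\textbf{Main obstacle.} The main obstacle is Step 3: the bookkeeping of the cross terms when $a<m$ (or $2a<a+2m$). There, $(1-q^a)(1-q^{a+m})$ and $1/(1-q^{2a})$ interact. I would first try to absorb them by grouping $(1-q^a)/(1-q^{2a})=1/(1+q^a)$ back together only for the large factor. If that fails, I would fall back to a direct enumeration of the few admissible exponent patterns below $a+2m$.
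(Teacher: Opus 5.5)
\textbf{There is a genuine gap: Step 3 is not finished.} Step 3, which you yourself call the main obstacle, is where the argument has to happen, and you carry it out only for the single negative term $-c(n-a-m)$. Your two bullets are correct. When $a+b\ge m$ (in particular in the balanced case $m=a+b$) they already suffice. In that case $b\le a$ gives $(-q^{a+b};q^m)_\infty/(q^{2a};q^{2m})_\infty\equiv1\pmod{q^m}$, so no other negative term reaches degrees below $a+2m$.

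When $a+b<m$, however, further negative contributions appear:
\begin{itemize}
\item $-c(n-2a-b-m)$ from $q^{a+b}\cdot(-q^{a+m})$ (for instance $a=b=1$, $m=5$, $n=8$ gives $-c(0)=-1$);
\item when $2a<m$, shifted copies such as $-c(n-3a-m)$ coming from $1/(1-q^{2a})$.
\end{itemize}
You leave open whether these can be absorbed without reusing the partitions your bullets consume, and you end with a fallback plan. As written, nothing is proved in this regime. A minor slip as well: $(-q^{a+b};q^m)_\infty\not\equiv1+q^{a+b}+q^{a+b+m}\pmod{q^{a+2m}}$ when $m>a+2b$, because $q^{2a+2b+m}$ survives. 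This is harmless, since that term only ever contributes positively here.

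\textbf{How to close it.} The missing idea is simply not to expand the positive factor. Set $\tilde N(q):=(-q^{a+b};q^m)_\infty/(q^{2a};q^{2m})_\infty\in\bN_0[[q]]$. Your Steps 1 and 2 give $F_{a,b,m}(q)\equiv(1-q^{a+m})\tilde N(q)C(q)\pmod{q^{a+2m}}$. So for $n<a+2m$ the negative part of $[q^n]F_{a,b,m}(q)$ counts pairs $(\nu,\pi)$. Here $\nu$ is a set of distinct parts from $\{a+b,a+b+m,\dots\}$ together with a partition into parts from $\{2a,2a+2m,\dots\}$, of some degree $d$, and $\pi$ is counted by $c(n-a-m-d)$. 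Since $n-a-m-d<m$, we have $d<m$ and $\pi=b^j$ with $0\le j<\ell$. Define the map as follows:
\begin{itemize}
\item send $(\nu,b^0)$ to $(\nu,\{b+m,b^{\ell-1}\})$;
\item for $j\ge1$, send $(\nu,b^j)$ to $(\nu\cup\{a+b+m\},b^{j-1})$.
\end{itemize}
The second rule is legitimate because $d<m<a+b+m$, so $\nu$ does not already contain the part $a+b+m$. Both images are pairs counted by $[q^n]\tilde N(q)C(q)$. The map is injective, since the two types are distinguished by whether the second component contains the part $b+m$. This handles every cross term at once, with no case split on $a<m$ or $b<m$.

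\textbf{Comparison with the paper.} Modulo $q^{a+2m}$, your $\tilde N C$ is the paper's $D_{k,1}(q^b)H(q)$. The paper makes the regrouping you suggested: it combines $1+q^{a+b}$, $1/(1+q^a)$ and $1/(1-q^b)$ into the local block. It then reads off that block's coefficients $\ell_k(r)\in\{0,1,2\}$ below degree $m$ and proves $\ell_k(r+k-1)+\ell_k(r-1)\ge\ell_k(r)$ by a case check. Your two bullets, applied pairwise as above, are an injective refinement of exactly those two contributions: a part $b+m$ from the denominator of $H$, and a part $a+b+m$ from its numerator.
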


The paper is organized as follows. Section~\ref{sec:pre} develops the local block and the finite and infinite decompositions. Section~\ref{sec:finite} and \ref{sec:14} prove Theorems \ref{thm:diagonal-intro} and \ref{thm:finite-initial-intro}, respectively. Section~\ref{sec:quasipoly} establishes Theorem \ref{thm:quasipoly-intro} through a root-of-unity pole analysis. Section~\ref{sec:infinite} proves Theorems \ref{thm:shift-aligned-intro} and \ref{thm:infinite-initial-intro}. Section~\ref{sec:conclusion} treats the balanced limit and summarizes the remaining obstruction.

\section{Preliminaries}\label{sec:pre}

We begin with the elementary identity that underlies several later arguments.

\begin{theorem}\label{thm:local-block-intro}
For $A,B\in\bN$, define
\[
R_{A,B}(q)
:=
\frac{1+q^{A+B}}{(1+q^A)(1-q^B)},
\]
then we have
\begin{equation}\label{eq:local-identity-intro}
R_{A,B}(q)
=
\frac{1}{1-q^B}-\frac{q^A}{1+q^A}
=
\frac{1}{1-q^B}+
\sum_{r\geq1}(-1)^r q^{rA}.
\end{equation}
Moreover,
\[
R_{A,B}(q)\in\bN_0[[q]]
\quad\Longleftrightarrow\quad
B\mid A.
\]
\end{theorem}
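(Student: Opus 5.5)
The plan is to verify the identity by clearing denominators, expand the alternating tail, and then read off the sign condition coefficient by coefficient.

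\textbf{The identity.} Put both sides over the common denominator $(1+q^A)(1-q^B)$. The right-hand side has numerator
\[
(1+q^A)-q^A(1-q^B)=1+q^{A+B},
\]
which gives the first equality in \eqref{eq:local-identity-intro}. The second equality follows from the expansion
\[
-\frac{q^A}{1+q^A}=\sum_{r\ge1}(-1)^r q^{rA},
\]
valid in $\bZ[[q]]$.

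\textbf{Coefficient formula.} From the identity, the coefficient of $q^n$ in $R_{A,B}(q)$ is
\[
[q^n]R_{A,B}=\mathbf 1[B\mid n]+\varepsilon_n,
\]
where $\varepsilon_n=(-1)^{n/A}$ if $A\mid n$ with $n\ge A$, and $\varepsilon_n=0$ otherwise. Thus the only possible negative coefficients occur at $n=rA$ with $r$ odd. There the coefficient equals $\mathbf 1[B\mid rA]-1$, which is at most $0$.

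\textbf{The case $B\mid A$.} Then $B\mid rA$ for every $r$. At odd multiples $rA$ the coefficient is $1-1=0$, and at even multiples it is $2$. All other coefficients are $0$ or $1$. Hence $R_{A,B}\in\bN_0[[q]]$.

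\textbf{The case $B\nmid A$.} Take $r=1$. The coefficient of $q^A$ is $\mathbf 1[B\mid A]-1=-1<0$, so $R_{A,B}\notin\bN_0[[q]]$. It is enough to test this single coefficient, since $B\nmid A$ directly makes the indicator at $n=A$ vanish.

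I do not expect any real obstacle. The only care needed is the bookkeeping of the overlap between the two geometric series at multiples of $A$ that are also multiples of $B$.
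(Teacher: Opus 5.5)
Your proposal is correct and follows essentially the same route as the paper. You clear denominators for the first equality and expand $-q^A/(1+q^A)$ as an alternating series. You then read off the coefficient $-1$ at $q^A$ when $B\nmid A$, and the values $0$, $1$, $2$ when $B\mid A$. Your indicator-function formula for $[q^n]R_{A,B}(q)$ just repackages the paper's case check more compactly.
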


\begin{proof}
A direct calculation gives 
\begin{align*}
\frac{1}{1-q^B}-\frac{q^A}{1+q^A}=R_{A, B}(q).
\end{align*}
Also, using $$-\frac{q^A}{1+q^A}=-q^A+q^{2A}-q^{3A}+\cdots=\sum_{r\geq1}(-1)^r q^{rA}$$ proves \eqref{eq:local-identity-intro}.

Suppose first that $B\nmid A$. In the series
\[
\frac{1}{1-q^B}
=
1+q^B+q^{2B}+\cdots,
\]
there is no term of degree $A$. The second series in \eqref{eq:local-identity-intro} contributes $-q^A$. Hence
\[
[q^A]R_{A,B}(q)=-1,
\]
so $R_{A,B}(q)$ is not coefficientwise non-negative.

Now suppose that $B\mid A$, we can write $A=uB$. At a degree $n$ that is not divisible by $B$, both series in \eqref{eq:local-identity-intro} have coefficient zero. At a degree $n=sB$, the geometric series contributes $1$. An additional contribution occurs only if $sB=rA=ruB$, or equivalently $s=ru$. In that case the total coefficient is
\[
1+(-1)^r,
\]
which is either $0$ or $2$. Thus every coefficient is non-negative. This completes the proof.
\end{proof}

Both the finite and infinite copartition products admit exact decompositions into products of those local blocks in Theorem \ref{thm:local-block-intro} and additional coefficientwise non-negative factors.

\subsection{Infinite decomposition} For $j\geq 0$, we set 
\begin{align*}
A_j:=a+jm,\quad B_j:=b+jm,\quad \text{and }C_j:=a+b+jm.
\end{align*}

\begin{proposition}\label{prop:infinite-local}
The product $F_{a,b,m}(q)$ has the coefficientwise formal factorization
\begin{equation}\label{eq:infinite-local-factorization}
F_{a,b,m}(q)
=
\prod_{j\geq0}R_{A_j,B_j}(q)
\prod_{j\geq0}
\bigl(1+q^{C_{2j+1}}\bigr).
\end{equation}
\end{proposition}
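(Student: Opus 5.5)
We prove the factorization by expanding both sides as products over $j\geq0$ and comparing factors. All products in \eqref{eq:infinite-local-factorization} converge in $\bZ[[q]]$, since $A_j,B_j,C_j\to\infty$ and each factor is $1$ plus a series of order at least $\min(A_j,B_j,C_j)$. So it suffices to identify the products factor by factor.

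First we write out the definition of $F_{a,b,m}$ in terms of the sequences $A_j,B_j,C_j$. We have $(-q^a;q^m)_\infty=\prod_{j\ge0}(1+q^{A_j})$ and $(q^b;q^m)_\infty=\prod_{j\ge0}(1-q^{B_j})$. For the numerator, $(-q^{a+b};q^m)_\infty=\prod_{j\ge0}(1+q^{C_j})$. Next we multiply out the local blocks of Theorem~\ref{thm:local-block-intro}:
\[
\prod_{j\ge0}R_{A_j,B_j}(q)=\prod_{j\ge0}\frac{1+q^{A_j+B_j}}{(1+q^{A_j})(1-q^{B_j})}.
\]
Here the key observation is
\[
A_j+B_j=a+b+2jm=C_{2j}.
\]
Hence the numerators of the local blocks are exactly the even-indexed factors $\prod_{j\ge0}(1+q^{C_{2j}})$. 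The remaining factor $\prod_{j\ge0}(1+q^{C_{2j+1}})$ supplies the odd-indexed ones. Together they give $\prod_{i\ge0}(1+q^{C_i})=(-q^{a+b};q^m)_\infty$, and the denominators give $(-q^a;q^m)_\infty(q^b;q^m)_\infty$. This yields \eqref{eq:infinite-local-factorization}.

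The only point needing care is that splitting the infinite product $\prod_i(1+q^{C_i})$ into its even- and odd-indexed subproducts is legitimate in formal power series. This holds because each coefficient of the product is determined by finitely many factors, namely those with $C_i\le n$. The same finiteness justifies rearranging the numerator and denominator factors of the local blocks.

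There is no genuine obstacle here. The phrase ``coefficientwise formal factorization'' refers to the fact that, when $B_j\mid A_j$, each factor is in $\bN_0[[q]]$, which we may remark follows from Theorem~\ref{thm:local-block-intro}.
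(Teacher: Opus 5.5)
Your argument is correct and is essentially the paper's proof: split $(-q^{a+b};q^m)_\infty$ into even- and odd-indexed factors, use $A_j+B_j=C_{2j}$ to pair each $1+q^{C_{2j}}$ with $(1+q^{A_j})(1-q^{B_j})$, and multiply over $j$. Your finiteness remark justifies the regrouping in $\bZ[[q]]$, which the paper leaves implicit. Your closing gloss misreads ``coefficientwise formal factorization'': it just means the identity holds in $\bZ[[q]]$, not that the blocks are non-negative (for $a>b$, $B_j\mid A_j$ fails for all large $j$), but this does not affect the proof.
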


\begin{proof}
We split the numerator of $F_{a, b, m}(q)$ into its even- and odd-indexed factors:
\begin{align*}
(-q^{a+b};q^m)_\infty
&=
\prod_{r\geq0}(1+q^{C_r})\\
&=
\prod_{j\geq0}(1+q^{C_{2j}})
\prod_{j\geq0}(1+q^{C_{2j+1}}).
\end{align*}
Since $C_{2j}=A_j+B_j$, pairing $1+q^{C_{2j}}$ with
\[
(1+q^{A_j})(1-q^{B_j})
\]
in the denominator of $F_{a, b, m}(q)$ produces $R_{A_j,B_j}(q)$. Multiplying over all $j$ proves \eqref{eq:infinite-local-factorization}.
\end{proof}

\begin{remark}
When $b\mid a$, the zeroth block is non-negative because $B_0=b$ divides $A_0=a$. For $a>b$, however, the divisibility $B_j\mid A_j$ cannot hold for every sufficiently large $j$, since
\[
A_j-B_j=a-b
\]
is fixed while $B_j\to\infty$. Thus the Conjecture \ref{conj:infinite} cannot be proved by showing that every local block in \eqref{eq:infinite-local-factorization} is non-negative.
\end{remark}

\subsection{Finite decomposition} 

Assume now that $m=a+b$. For $0\leq j<N$, we have
\[
A_j+B_j
=a+b+2jm
=(2j+1)m.
\]
The numerator factor of exponent $(2j+1)m$ can therefore be paired with the $j$th signed and ordinary denominator factors.

Define
\[
\mathcal{E}_{N,M}
:=
\{1,2,\ldots,N+M-1\}
\setminus
\{1,3,5,\ldots,2N-1\}.
\]
The inclusion $2N-1\leq N+M-1$ follows from $N\leq M$.

\begin{proposition}\label{prop:finite-local}
Under the hypotheses of Conjecture~\ref{conj:finite}, we have
\begin{equation}\label{eq:finite-local-factorization}
\begin{split}
g_{a,b,m}(N,M;q)
={}
\prod_{j=0}^{N-1}R_{A_j,B_j}(q)\times
\prod_{r\in\mathcal{E}_{N,M}}(1+q^{rm})
\prod_{j=N}^{M-1}\frac{1}{1-q^{B_j}}.
\end{split}
\end{equation}
\end{proposition}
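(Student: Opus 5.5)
The factorization is pure bookkeeping: expand both finite $q$-Pochhammer symbols in the denominator and the one in the numerator of \eqref{eq:g-definition}, then regroup the factors. With $m=a+b$, the denominator is
\[
(-q^a;q^m)_N(q^b;q^m)_M=\prod_{j=0}^{N-1}(1+q^{A_j})\prod_{j=0}^{M-1}(1-q^{B_j}),
\]
since $a+jm=A_j$ and $b+jm=B_j$. I would first split the second product at $j=N$, which is possible because $N\leq M$. This gives $\prod_{j=0}^{N-1}(1+q^{A_j})(1-q^{B_j})$ times $\prod_{j=N}^{M-1}(1-q^{B_j})$.

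Next I would write the numerator as $(-q^m;q^m)_{N+M-1}=\prod_{r=1}^{N+M-1}(1+q^{rm})$. I would then separate the odd indices $r=2j+1$ with $0\leq j\leq N-1$ from the rest. The largest such index is $2N-1$, and $2N-1\leq N+M-1$ by $N\leq M$, so all of these odd indices actually occur. The remaining indices are exactly $\mathcal{E}_{N,M}$.

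For each $0\leq j\leq N-1$, the computation preceding the proposition gives $A_j+B_j=(2j+1)m$. So the numerator factor $1+q^{(2j+1)m}$ pairs with $(1+q^{A_j})(1-q^{B_j})$ to produce exactly $R_{A_j,B_j}(q)$ from Theorem~\ref{thm:local-block-intro}.

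Multiplying everything together gives \eqref{eq:finite-local-factorization}. The leftover pieces are the numerator factors indexed by $\mathcal{E}_{N,M}$ and the reciprocals $1/(1-q^{B_j})$ for $N\leq j\leq M-1$.

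No step is a real obstacle. The only points that need care are the index ranges: the odd indices $1,3,\ldots,2N-1$ must lie inside $\{1,\ldots,N+M-1\}$, and the split of the $(q^b;q^m)_M$ product must be valid. Both follow from $N\leq M$. The hypothesis $b\mid a$ is not needed for the identity itself; it matters only later, for the non-negativity of the blocks.
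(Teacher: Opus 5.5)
Your proposal is correct and follows essentially the same argument as the paper. You expand both sides, split off the factors $j=N,\ldots,M-1$ of the ordinary denominator, and pair each odd-index numerator factor $1+q^{(2j+1)m}$ with $(1+q^{A_j})(1-q^{B_j})$ to form $R_{A_j,B_j}(q)$; your remark that only $m=a+b$ and $N\leq M$, not $b\mid a$, are used is also accurate.
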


\begin{proof}
We can write out the product:
\[
g_{a,b,m}(N,M;q)
=
\frac{\prod_{r=1}^{N+M-1}(1+q^{rm})}
{\prod_{j=0}^{N-1}(1+q^{A_j})
 \prod_{j=0}^{M-1}(1-q^{B_j})}.
\]
For $0\leq j<N$, we pair the numerator factor $1+q^{(2j+1)m}$ with
\[
(1+q^{A_j})(1-q^{B_j}).
\]
The resulting quotient is $R_{A_j,B_j}(q)$. The unpaired numerator indices form $\mathcal{E}_{N,M}$, while the unpaired ordinary denominator factors have $N\leq j<M$. This proves \eqref{eq:finite-local-factorization}.
\end{proof}

\begin{remark}
Let $a=kb$ and $m=(k+1)b$. If $k>1$, then
\[
A_1=(2k+1)b,
\qquad
B_1=(k+2)b.
\]
The divisibility $B_1\mid A_1$ would require
\[
k+2\mid 2k+1.
\]
Since
\[
2k+1=2(k+2)-3,
\]
this would imply $k+2\mid3$, which is impossible for $k>1$. By Theorem~\ref{thm:local-block-intro}, the block $R_{A_1,B_1}$ has coefficient $-1$ at degree $A_1$. Therefore positivity of the complete finite product for $k>1$ and $N\geq2$ must result from cancellation between different factors.
\end{remark}

\section{Proof of Theorem \ref{thm:diagonal-intro}}\label{sec:finite}

\begin{proof}[Proof of Theorem \ref{thm:diagonal-intro}]
By definition, we can write out the factors:
\begin{equation}\label{eq:G-expanded}
G_{k,N,M}(x)
=
\frac{\prod_{r=1}^{N+M-1}(1+x^{(k+1)r})}
{
 \prod_{i=1}^{N}(1+x^{(k+1)i-1})
 \prod_{j=0}^{M-1}(1-x^{(k+1)j+1})}.
\end{equation}
Then we can extract from $G_{k,N,M}(x)$ all the factors that constitute $D_{k, N}=G_{k,N,N}(x)$:
\begin{align*}
G_{k, N, M}(x)&=\left[\frac{\prod_{r=1}^{2N-1}(1+x^{(k+1)r})}
{
 \prod_{i=1}^{N}(1+x^{(k+1)i-1})
 \prod_{j=0}^{N-1}(1-x^{(k+1)j+1})}\right]\times \left[\frac{\prod_{r=2N}^{N+M-1}(1+x^{(k+1)r})}{\prod_{j=N}^{M-1}(1-x^{(k+1)j+1})}\right]\\
 &=D_{k, N}(x)\frac{\prod_{r=2N}^{N+M-1}(1+x^{(k+1)r})}{\prod_{j=N}^{M-1}(1-x^{(k+1)j+1})}\\
 &=D_{k, N}(x)\prod_{j=N}^{M-1}
\frac{1+x^{(k+1)(N+j)}}{1-x^{(k+1)j+1}}.\quad\text{(by $r=N+j$)}
\end{align*}
Every factor in the additional product has non-negative coefficients. Hence non-negativity of every diagonal product $D_{k, N}(x)$ implies non-negativity of every product with $M\geq N$.

Conversely, Conjecture~\ref{conj:finite} includes the diagonal choice $M=N$. Thus the conjecture is equivalent to non-negativity of all $D_{k,N}(x)$.
\end{proof}
Thus we obtain some results in special cases.

\subsection{The case $N=1$}

\begin{theorem}\label{thm:N-one-intro}
For all $k,M\in\bN$,
\[
G_{k,1,M}(x)\in\bN_0[[x]].
\]
Equivalently, Conjecture~\ref{conj:finite} holds whenever $N=1$.
\end{theorem}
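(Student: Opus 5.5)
For $N=1$ the diagonal reduction does almost all the work. By Theorem~\ref{thm:diagonal-intro} with $N=1$,
\[
G_{k,1,M}(x)=D_{k,1}(x)\prod_{j=1}^{M-1}\frac{1+x^{(k+1)(1+j)}}{1-x^{(k+1)j+1}} .
\]
The extra product is a product of polynomials with non-negative coefficients and geometric series $\sum_{s\ge0}x^{s((k+1)j+1)}$. Each such factor lies in $\bN_0[[x]]$, and $\bN_0[[x]]$ is closed under multiplication. It therefore suffices to show that $D_{k,1}(x)\in\bN_0[[x]]$.

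Next I would evaluate $D_{k,1}$ directly from \eqref{eq:G-intro} with $N=M=1$. The numerator is $(-x^{k+1};x^{k+1})_1=1+x^{k+1}$, and the denominator is $(1+x^k)(1-x)$. Hence
\[
D_{k,1}(x)=\frac{1+x^{k+1}}{(1+x^{k})(1-x)}=R_{k,1}(x),
\]
which is precisely the local block of Theorem~\ref{thm:local-block-intro} with $A=k$ and $B=1$. Equivalently, this is the single block $j=0$ in Proposition~\ref{prop:finite-local}.

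Since $B=1$ divides $A=k$, Theorem~\ref{thm:local-block-intro} gives $R_{k,1}(x)\in\bN_0[[x]]$. Concretely, its coefficients are $1+(-1)^r\in\{0,2\}$ at degrees $rk$ with $r\ge1$, and $1$ at all other degrees. Multiplying by the non-negative extra product finishes the argument.

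For the equivalence with Conjecture~\ref{conj:finite}, I would recall that under $b\mid a$ and $a+b=m$ one has $g_{a,b,a+b}(1,M;q)=G_{k,1,M}(q^b)$. Substituting $x=q^b$ preserves coefficient non-negativity. I do not expect any real obstacle; the only care needed is checking the index bookkeeping, in particular that the $i=1$ signed factor is $1+x^{k}$, which it is, since $(k+1)\cdot1-1=k$.
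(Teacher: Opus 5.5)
Your proof is correct. Like the paper, you reduce via Theorem~\ref{thm:diagonal-intro} to the single factor $D_{k,1}(x)=\frac{1+x^{k+1}}{(1+x^k)(1-x)}$. Where you differ is in how you certify that this factor is non-negative.

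You recognize $D_{k,1}$ as the local block $R_{k,1}$ and apply Theorem~\ref{thm:local-block-intro}, using $B=1\mid A=k$. The paper instead uses $\frac{1}{1+x^k}=\frac{1-x^k}{1-x^{2k}}$ to rewrite
\[
D_{k,1}(x)=\frac{(1+x+\cdots+x^{k-1})(1+x^{k+1})}{1-x^{2k}},
\]
which is visibly a non-negative polynomial times a geometric series.

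Your route reuses an already-proved result and gives the exact coefficients at once. These are the values $\ell_k(r)\in\{0,1,2\}$ of Lemma~\ref{lem:ell-values}, which the paper only derives later in Section~\ref{sec:14}. The paper's rewriting is self-contained and does not depend on the local-block theorem.

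Either argument suffices, and the rest of your bookkeeping is right: the extra diagonal factors, the identification $g_{a,b,a+b}(1,M;q)=G_{k,1,M}(q^b)$, and the signed exponent $(k+1)\cdot 1-1=k$.
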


\begin{proof}
By Theorem \ref{thm:diagonal-intro}, it suffices to consider
\[
D_{k,1}(x)
=
\frac{1+x^{k+1}}{(1+x^k)(1-x)}.
\]
Using
\[
\frac{1}{1+x^k}
=
\frac{1-x^k}{1-x^{2k}},
\]
we obtain
\begin{align*}
D_{k,1}(x)
&=
\frac{(1-x^k)(1+x^{k+1})}
{(1-x)(1-x^{2k})}=
\frac{(1+x+x^2+\cdots+x^{k-1})(1+x^{k+1})}
{1-x^{2k}}.
\end{align*}
The last expression is a product of a polynomial with non-negative coefficients and a geometric series. Hence, we have
\[
D_{k,1}(x)\in\bN_0[[x]].
\]
Theorem~\ref{thm:diagonal-intro} now gives the result for every $M\in\bN$.
\end{proof}

\subsection{The case $a=b$}
\begin{theorem}\label{thm:equal-finite-intro}
For all $a,N,M\in\bN$ with $N\leq M$, we have
\[
\frac{(-q^{2a};q^{2a})_{N+M-1}}
{(-q^a;q^{2a})_N(q^a;q^{2a})_M}
\in\bN_0[[q]].
\]
Equivalently, Conjecture~\ref{conj:finite} holds whenever $a=b$.
\end{theorem}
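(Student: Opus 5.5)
By Theorem~\ref{thm:diagonal-intro}, it is enough to show that the diagonal products $D_{k,N}(x)$ with $k=1$ have non-negative coefficients. This is the right specialization: when $a=b$ we have $m=2a$, so $k=1$ and $x=q^a$. Theorem~\ref{thm:diagonal-intro} then covers every $M\geq N$, because the extra factors $(1+x^{2(N+j)})/(1-x^{2j+1})$ are coefficientwise non-negative.

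For $k=1$ the denominator of $D_{1,N}(x)$ collapses. Pairing the $i$th signed factor with the $(i-1)$th ordinary factor gives
\[
(1+x^{2i-1})(1-x^{2i-1})=1-x^{4i-2}.
\]
Hence
\[
D_{1,N}(x)=\frac{\prod_{r=1}^{2N-1}(1+x^{2r})}{\prod_{i=1}^{N}(1-x^{4i-2})}.
\]

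Next split the numerator by the parity of $r$. For odd $r=2i-1$ with $1\le i\le N$, the factor is $1+x^{4i-2}$, and it cancels against the denominator factor with the same $i$. For even $r=2i$ with $1\le i\le N-1$, the factor is $1+x^{4i}$ and is left over. This gives
\[
D_{1,N}(x)=\prod_{i=1}^{N}\frac{1+x^{4i-2}}{1-x^{4i-2}}\prod_{i=1}^{N-1}(1+x^{4i}).
\]

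Each factor $\frac{1+y}{1-y}=1+2\sum_{s\ge1}y^s$ lies in $\bN_0[[y]]$. This is also the case $A=B$ of Theorem~\ref{thm:local-block-intro}, since $R_{A,A}=\frac{1+q^{2A}}{1-q^{2A}}$. The remaining factors $1+x^{4i}$ are polynomials with non-negative coefficients. So $D_{1,N}(x)\in\bN_0[[x]]$, and substituting back $x=q^a$ gives the stated product in $\bN_0[[q]]$.

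The only real step is noticing the collapse $(-x;x^2)_N(x;x^2)_N=(x^2;x^4)_N$ and matching it against the odd-indexed numerator factors. After that, everything is routine bookkeeping of the index ranges, which uses $2N-1\le N+M-1$.
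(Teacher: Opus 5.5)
Your proposal is correct and follows the paper's proof essentially step for step. Like the paper, you reduce to $D_{1,N}$ via Theorem~\ref{thm:diagonal-intro}, pair $(1+x^{2i-1})(1-x^{2i-1})=1-x^{4i-2}$, and cancel against the odd-indexed numerator factors to reach the factorization \eqref{eq:d-two-positive-factorization}.
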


\begin{proof}
When $a=b$, we have $k=1$. By Theorem \ref{thm:diagonal-intro}, it suffices to prove non-negativity of
\[
D_{1,N}(x)
=
\frac{\prod_{r=1}^{2N-1}(1+x^{2r})}
{\prod_{i=1}^{N}(1+x^{2i-1})
 \prod_{j=0}^{N-1}(1-x^{2j+1})}.
\]
For each $i=1,\ldots,N$, we pair the denominator factors having exponent $2i-1$:
\[
(1+x^{2i-1})(1-x^{2i-1})
=1-x^{4i-2}.
\]
The numerator contains $1+x^{4i-2}$, corresponding to the odd numerator index $r=2i-1$. The even numerator indices are $r=2j$ for $1\leq j<N$, and their factors are $1+x^{4j}$. Therefore
\begin{equation}\label{eq:d-two-positive-factorization}
D_{1,N}(x)
=
\prod_{i=1}^{N}
\frac{1+x^{4i-2}}{1-x^{4i-2}}
\prod_{j=1}^{N-1}(1+x^{4j}).
\end{equation}
Every factor on the right has non-negative coefficients. This proves non-negativity of $D_{1,N}(x)$ and hence, by Theorem~\ref{thm:diagonal-intro}, of every $G_{1,N,M}(x)$ with $M\geq N$.
\end{proof}

\begin{corollary}\label{cor:d-two-support}
For every $N\in \bN$, we have
\[
[x^{2r+1}]D_{1,N}(x)=0
\qquad(r\in\bN_0),
\]
whereas every even-degree coefficient is positive.
\end{corollary}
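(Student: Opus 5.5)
We will read both claims directly off the factorization \eqref{eq:d-two-positive-factorization},
\[
D_{1,N}(x)=\prod_{i=1}^{N}\frac{1+x^{4i-2}}{1-x^{4i-2}}\prod_{j=1}^{N-1}(1+x^{4j}).
\]

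\textbf{Odd coefficients vanish.} Every exponent that occurs on the right-hand side, namely $4i-2$ and $4j$, is even. Hence $D_{1,N}(x)$ is a power series in $x^2$. This already gives $[x^{2r+1}]D_{1,N}(x)=0$ for every $r\in\bN_0$.

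\textbf{Even coefficients are positive.} All factors have non-negative coefficients, and the constant term of the product is $1$. So it suffices to find one factor with non-negative coefficients whose coefficients are strictly positive in every even degree, and to check that the remaining factors have constant term $1$ and non-negative coefficients. The product then dominates that single factor coefficientwise.

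For this, take only the $i=1$ factor,
\[
\frac{1+x^2}{1-x^2}=1+2\sum_{s\geq1}x^{2s}.
\]
Its coefficient in every even degree is at least $1$. Multiplying by the other factors, each of which has constant term $1$ and non-negative coefficients, can only increase coefficients. Therefore $[x^{2s}]D_{1,N}(x)\geq [x^{2s}]\frac{1+x^2}{1-x^2}\geq1$ for every $s\geq0$.

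There is no real obstacle here. The only point needing care is that the $i=1$ factor is present for every $N\geq1$, which holds because the product over $i$ runs from $1$ to $N$.
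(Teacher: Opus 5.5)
Your proof is correct and is essentially the paper's argument. Both read the vanishing of odd coefficients off the even exponents in \eqref{eq:d-two-positive-factorization}. Both get positivity of even coefficients from the $i=1$ factor dominating coefficientwise, since every other factor is non-negative with constant term $1$; the paper cites its $(1-x^2)^{-1}$ part, while you use the whole $\frac{1+x^2}{1-x^2}$.
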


\begin{proof}
Formula \eqref{eq:d-two-positive-factorization} involves only even powers of $x$, proving the first assertion. Its first factor with $i=1$ contains $(1-x^2)^{-1}$, while every remaining factor has constant term $1$. Hence every even power occurs with positive coefficient.
\end{proof}

\section{Proof of Theorem \ref{thm:finite-initial-intro}}\label{sec:14}

For $k\in\bN$, we have
\[
D_{k, 1}(x)
=
\frac{1+x^{k+1}}{(1+x^k)(1-x)}
=
\sum_{r\geq0}\ell_k(r)x^r.
\]
The local identity of Theorem~\ref{thm:local-block-intro} gives
\begin{equation}\label{eq:Lk-sparse}
D_{k, 1}(x)
=
\frac{1}{1-x}
+
\sum_{s\geq1}(-1)^s x^{sk}.
\end{equation}

\begin{lemma}\label{lem:ell-values}
For $r\geq0$, we have
\[
\ell_k(r)
=
\begin{cases}
1, & r=0,\\
1, & k\nmid r,\\
0, & r=(2s+1)k\text{ for some }s\geq0,\\
2, & r=2sk\text{ for some }s\geq1.
\end{cases}
\]
If we set $\ell_k(r)=0$ for $r<0$, then
\begin{equation}\label{eq:ell-shift-inequality}
\ell_k(r+k-1)+\ell_k(r-1)
\geq
\ell_k(r)
\qquad(r\geq0).
\end{equation}
\end{lemma}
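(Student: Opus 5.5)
The plan is to read off the coefficients directly from the sparse expansion \eqref{eq:Lk-sparse}, which comes from Theorem~\ref{thm:local-block-intro} with $A=k$, $B=1$. The geometric series $1/(1-x)$ contributes $1$ at every degree $r\geq0$. The signed series $\sum_{s\geq1}(-1)^s x^{sk}$ contributes only at positive multiples of $k$, and contributes $(-1)^s$ at $r=sk$.

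So $\ell_k(0)=1$, and $\ell_k(r)=1$ whenever $k\nmid r$. At $r=sk$ with $s\geq1$ we get $1+(-1)^s$, which is $0$ for odd $s$ and $2$ for even $s$. Writing odd $s$ as $2s'+1$ with $s'\geq0$ gives the four listed cases. When $k=1$ every $r\geq1$ is a multiple of $k$; the case list still applies, since the condition $k\nmid r$ is then never met. In every case $0\leq\ell_k(r)\leq2$.

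For \eqref{eq:ell-shift-inequality}, first dispose of the easy situations.
\begin{itemize}
\item If $\ell_k(r)=0$, the right side is $0$ and the left side is non-negative, so the inequality holds.
\item If $r=0$, the left side is $\ell_k(k-1)+\ell_k(-1)=\ell_k(k-1)$. For $k\geq2$ this equals $1$, because $k\nmid k-1$. For $k=1$ it is $\ell_1(0)=1$. Either way it is at least $\ell_k(0)=1$.
\end{itemize}

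Now take $r\geq1$. The key observation is that $r-1$ and $r+k-1$ are congruent modulo $k$, so they lie in the same residue class.

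If $k\nmid r-1$, both $\ell_k(r-1)$ and $\ell_k(r+k-1)$ equal $1$. The left side is then $2\geq\ell_k(r)$.

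If $k\mid r-1$, write $r-1=tk$ with $t\geq0$, so that $r+k-1=(t+1)k$. Exactly one of $t$ and $t+1$ is even.
\begin{itemize}
\item If $t=0$, then $\ell_k(0)=1$, and $\ell_k(k)=0$ since $k=(2\cdot0+1)k$. The left side is $1$. Here $r=1$, and $\ell_k(1)=1$ when $k\geq2$. When $k=1$, $r=1=k$ gives $\ell_1(1)=0$. So the inequality holds.
\item If $t\geq1$, one of $tk$ and $(t+1)k$ is an even positive multiple of $k$, so its coefficient is $2$ and the other is $0$. The left side is therefore $2\geq\ell_k(r)$.
\end{itemize}

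The only mildly delicate points are the boundary cases $r=0$, $r=1$ and $k=1$, which I will check separately as above. There is no real obstacle beyond that.
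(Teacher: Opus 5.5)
Your proof is correct and is essentially the paper's argument: both read the coefficients off the sparse expansion \eqref{eq:Lk-sparse} and then check the inequality by elementary casework modulo $k$. The difference is only organizational. The paper splits on the value of $\ell_k(r)$ and handles $k=1$ separately via $\ell_1(r+k-1)=\ell_1(r)$, whereas you split on whether $k\mid r-1$, which treats $k=1$ uniformly and also shows that the left side equals $2$ for every $r\geq 2$.
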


\begin{proof}
The coefficient description follows immediately from \eqref{eq:Lk-sparse}. The geometric series contributes $1$ in every degree. If $r=sk$ with $s\geq1$, the sparse correction contributes $(-1)^s$. Thus the total is $0$ when $s$ is odd and $2$ when $s$ is even.

Now it suffices to prove \eqref{eq:ell-shift-inequality}. If $k=1$, then its left side contains $\ell_1(r+k-1)=\ell_1(r)$, so the inequality is immediate. Next we assume $k\geq2$. For $r=0$, we have
\[
\ell_k(k-1)=1=\ell_k(0).
\]
Now let $r\geq1$. If $\ell_k(r)=0$, there is nothing to prove. If $\ell_k(r)=2$, then $r=2sk$ for some $s\geq1$. Both $r-1$ and $r+k-1$ are congruent to $-1$ modulo $k$, so neither is divisible by $k$. Hence
\[
\ell_k(r-1)=\ell_k(r+k-1)=1,
\]
and their sum equals $2=\ell_k(r)$.

It remains to consider $\ell_k(r)=1$. If $\ell_k(r+k-1)\geq1$, the result is immediate. Otherwise, we have
\[
r+k-1=(2s+1)k
\]
for some $s\geq0$. This implies that
\[
r-1=2sk.
\]
If $s=0$, then $r-1=0$ and $\ell_k(r-1)=1$. If $s\geq1$, then $\ell_k(r-1)=2$. In either case the left side of \eqref{eq:ell-shift-inequality} is at least $1=\ell_k(r)$.
\end{proof}

\begin{proof}[Proof of Theorem~\ref{thm:finite-initial-intro}]
The case $N=1$ follows from Theorem~\ref{thm:N-one-intro}, so we assume $N\geq2$. Then naturally $M\geq N\geq 2$. Let
\[
A_1:=a+m,
\qquad
A_2:=a+2m,
\qquad
B_1:=b+m.
\]
Next we separate the zeroth numerator, signed-denominator, and ordinary-denominator factors in \eqref{eq:g-definition}. Since $m=a+b$, the resulting initial block is
\[
\frac{1+q^m}{(1+q^a)(1-q^b)}
=D_{k, 1}(q^b),
\qquad k=\frac{a}{b}.
\]
By Lemma \ref{lem:ell-values} we define the coefficientwise non-negative series
\begin{equation}\label{eq:finite-positive-H}
H_{N,M}(q)
:=
D_{k, 1}(q^b)
\frac{\prod_{r=2}^{N+M-1}(1+q^{rm})}
{\prod_{j=1}^{M-1}(1-q^{b+jm})}.
\end{equation}
Then
\begin{equation}\label{eq:finite-H-signed-tail}
g_{a,b,m}(N,M;q)
=
H_{N,M}(q)
\prod_{i=1}^{N-1}\frac{1}{1+q^{a+im}}.
\end{equation}
We write
\[
H_{N,M}(q)=\sum_{n\geq0}h(n)q^n.
\]
The signed tail in \eqref{eq:finite-H-signed-tail} is
\[
\prod_{i=1}^{N-1}\frac{1}{1+q^{a+im}}
\equiv
1-q^{A_1}
\pmod{q^{A_2}\bZ[[q]]},
\]
since the next signed factor starts in degree $A_2$, while $2A_1=2a+2m>A_2$. It follows immediately that the coefficients of $g_{a, b, m}(N, M; q)$ below $A_1$ are non-negative. For
\[
n=A_1+t,
\qquad
0\leq t<m,
\]
we have
\begin{equation}\label{eq:finite-difference}
[q^n]g_{a,b,m}(N,M;q)
=h(A_1+t)-h(t).
\end{equation}
We will prove that the right side of \eqref{eq:finite-difference} is non-negative for $0\leq t<m$.

Every non-constant factor in \eqref{eq:finite-positive-H}, apart from $D_{k, 1}(q^b)$, has degree at least $B_1=b+m>m$. Therefore, for $0\leq t<m$, we have
\[
h(t)=[q^t]D_{k, 1}(q^b).
\]
If $b\nmid t$, then $h(t)=0$, and \eqref{eq:finite-difference} is non-negative because $h(A_1+t)\geq0$.

Suppose now that $t=rb$. Then
\[
h(t)=\ell_k(r).
\]
We exhibit two disjoint classes of contributions to $h(A_1+rb)$.

\begin{itemize}
\item First, we choose one copy of the ordinary part $B_1=b+m$ from the factor $(1-q^{B_1})^{-1}$, take constant terms from every other factor outside $D_{k, 1}(q^b)$, and take degree
\begin{align*}
A_1+rb-B_1=(r+k-1)b
\end{align*}
from $D_{k, 1}(q^b)$. These choices contribute $\ell_k(r+k-1)$.

\item Second, we choose the numerator term $q^{2m}$ from the factor $1+q^{2m}$, take constant terms from the other exterior factors, and take degree
\begin{align*}
A_1+rb-2m=(r-1)b
\end{align*}
from $D_{k, 1}(q^b)$, where we used $m=a+b$. This contributes $\ell_k(r-1)$, with the convention that it is zero when $r=0$.
\end{itemize}
The two classes are disjoint because one selects an ordinary denominator part of size $B_1$, while the other selects the numerator part $2m$. Hence
\[
h(A_1+rb)
\geq
\ell_k(r+k-1)+\ell_k(r-1).
\]
Lemma~\ref{lem:ell-values} gives
\[
h(A_1+rb)
\geq
\ell_k(r)
=h(rb).
\]
Equation \eqref{eq:finite-difference} is therefore non-negative for every $0\leq t<m$. This proves non-negativity for
\[
0\leq n<A_2=a+2m.
\]
Since $m=a+b$, this is the stated range $n<a+2(a+b)$.
\end{proof}

\section{Proof of Theorem \ref{thm:quasipoly-intro}}\label{sec:quasipoly}

The coefficients of a rational function whose poles are roots of unity are eventually quasipolynomial. This is standard; see Stanley \cite[Chapter 4]{Stanley} and Flajolet--Sedgewick \cite[Chapter IV]{FlajoletSedgewick}. We record the precise form needed here.

\begin{lemma}\label{lem:root-unity-pf}
Let $R(x)\in\bC(x)$ be regular at $x=0$, and suppose every finite pole of $R$ is a root of unity. If a pole $\zeta$ has order $s_\zeta$, then the partial-fraction decomposition
\[
R(x)
=P(x)
+
\sum_{\zeta}
\sum_{r=1}^{s_\zeta}
\frac{c_{\zeta,r}}{(1-x/\zeta)^r}
\]
for a polynomial $P(x)$. Moreover,
\[
[x^n]\frac{1}{(1-x/\zeta)^r}
=
\zeta^{-n}\binom{n+r-1}{r-1}.
\]
Consequently, once $n>\deg P$, the coefficient $[x^n]R(x)$ is a quasipolynomial in $n$. A pole of order $s$ contributes a term of degree at most $s-1$.
\end{lemma}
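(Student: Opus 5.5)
This is the standard rational-function fact, so I would prove it by partial fractions over $\bC$. First reduce to a proper rational function: since $R$ is regular at $0$, write $R=U/V$ with $U,V\in\bC[x]$ coprime and $V(0)\neq0$. By hypothesis every root of $V$ is a root of unity. Polynomial division gives $U=PV+W$ with $\deg W<\deg V$, so $R=P+W/V$.

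Next decompose $W/V$. Factor
\[
V(x)=c\prod_\zeta(1-x/\zeta)^{s_\zeta},
\]
which is possible because $V(0)\neq0$, so every root $\zeta$ is nonzero and each linear factor can be normalized to $(1-x/\zeta)$. The pairwise coprime factors give, via the Chinese remainder theorem or Bézout, a decomposition of $W/V$ into terms $c_{\zeta,r}(1-x/\zeta)^{-r}$ with $1\le r\le s_\zeta$. This step uses only the fact that the $\zeta$ are distinct, and properness of $W/V$ rules out any additional polynomial part. Uniqueness is not needed.

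For the coefficient formula, expand the negative binomial series
\[
(1-y)^{-r}=\sum_{n\ge0}\binom{n+r-1}{r-1}y^n
\]
and substitute $y=x/\zeta$. The binomial coefficient $\binom{n+r-1}{r-1}$ is a polynomial in $n$ of degree exactly $r-1$. Since $\zeta$ is a root of unity, say of order $d_\zeta$, the factor $\zeta^{-n}$ depends only on $n \bmod d_\zeta$. Let $L$ be the lcm of all the orders $d_\zeta$. Then for $n>\deg P$ the polynomial part contributes nothing, and
\[
[x^n]R=\sum_{\zeta,r}c_{\zeta,r}\,\zeta^{-n}\binom{n+r-1}{r-1}.
\]
On each residue class modulo $L$, this sum is a polynomial in $n$. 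The contribution of a pole $\zeta$ has degree at most $s_\zeta-1$.

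There is no genuine obstacle here. The only points needing care are the normalization $V(0)\neq0$, which lets every factor be written in the form $(1-x/\zeta)$, and the observation that the quasipolynomial formula holds exactly from $n=\deg P+1$ onward, since $P$ affects only coefficients up to degree $\deg P$. If $\deg W/V$ were not negative we could not start right after $\deg P$, but the division step above guarantees properness.
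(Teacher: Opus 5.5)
Your proof is correct. It is the standard partial-fraction argument (division to isolate $P$, decomposition of the proper part into $(1-x/\zeta)^{-r}$ terms, negative binomial expansion, and periodicity of $\zeta^{-n}$ modulo the lcm of the orders), which is what the paper relies on: it gives no proof of its own for this lemma and only cites Stanley (Chapter 4) and Flajolet--Sedgewick (Chapter IV).
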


\subsection{Degree and period bounds}

From \eqref{eq:G-expanded}, the numerator degree is
\[
\frac{(k+1)(N+M-1)(N+M)}{2},
\]
and the denominator degree is
\begin{align*}
\sum_{i=1}^{N}((k+1)i-1)
+
\sum_{j=0}^{M-1}((k+1)j+1)
=
\frac{(k+1)N(N+1)}{2}-N+
\frac{(k+1)M(M-1)}{2}+M.
\end{align*}
Subtracting the two degrees above yields
\begin{equation}\label{eq:degree-difference}
\Delta_{k,N,M}
=(k+1)N(M-1)+N-M.
\end{equation}
Thus the polynomial part in the partial-fraction decomposition of \eqref{eq:G-expanded} has degree at most $\Delta_{k,N,M}$.

Every root of $1-x^{(k+1)j+1}$ has order dividing $(k+1)j+1$. Every root of $1+x^{(k+1)i-1}$ has order dividing $2((k+1)i-1)$. Hence all poles have orders dividing
\[
\Lambda_{k,N,M}
=
\lcm\Bigl(
\{(k+1)j+1:0\leq j<M\}
\cup
\{2((k+1)i-1):1\leq i\leq N\}
\Bigr).
\]
Lemma~\ref{lem:root-unity-pf} therefore proves the existence, period bound, degree bound, and onset asserted in Theorem~\ref{thm:quasipoly-intro}. It remains to determine the maximal pole orders and their leading constants.

\subsection{Pole orders}

Let $\zeta$ be a primitive $t$th root of unity. Define
\begin{align*}
U_t
&:=
\#\{1\leq i\leq N:1+\zeta^{(k+1)i-1}=0\},\\
V_t
&:=
\#\{0\leq j\leq M-1:1-\zeta^{(k+1)j+1}=0\},\\
W_t
&:=
\#\{1\leq r\leq N+M-1:1+\zeta^{(k+1)r}=0\}.
\end{align*}
Every vanishing factor has a simple zero. Thus the pole order at $\zeta$, if positive, is
\begin{equation}\label{eq:pole-order-count}
U_t+V_t-W_t.
\end{equation}

\begin{lemma}\label{lem:maximal-poles}
If we assume $(N,M)\neq (1,1)$, then 
\begin{itemize}
\item[(i).] The point $x=1$ is a pole of order exactly $M$.
\item[(ii).] If $M>N$, every root of unity other than $1$ is a pole of order at most $M-1$.
\item[(iii).] If $M=N$, every root of unity other than $1$ has pole order at most $N-1$, except that $x=-1$ has pole order exactly $N$ when $k$ is odd.
\end{itemize}
\end{lemma}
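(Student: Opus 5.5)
We determine pole orders via the count \eqref{eq:pole-order-count}, organised by the order $t$ of the root of unity $\zeta$. The order at $\zeta$ is at most $U_t+V_t-W_t$, and we use $W_t$ only when we need cancellation.

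Part (i) is immediate. At $\zeta=1$ every signed denominator factor and every numerator factor equals $2\neq0$, so $U_1=W_1=0$. Every ordinary factor vanishes, so $V_1=M$ and the order is exactly $M$.

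For $\zeta=-1$ (so $t=2$) we compute directly, splitting by the parity of $k$.
\begin{itemize}
\item If $k$ is odd, then $k+1$ is even. Every exponent $(k+1)i-1$ and $(k+1)j+1$ is odd and every $(k+1)r$ is even, so $U_2=N$, $V_2=0$ and $W_2=0$. The pole order is exactly $N$; this gives the exceptional case in (iii).
\item If $k$ is even, then $k+1$ is odd. Here $V_2=\lfloor M/2\rfloor$ (odd $j$), $U_2=\lfloor N/2\rfloor$ (even $i$), and $W_2=\lceil (N+M-1)/2\rceil$ (odd $r$). One checks $U_2+V_2-W_2\leq 0$, so there is no pole. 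This is also fine for (ii) when $k$ is odd, since then $N\leq M-1$.
\end{itemize}

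For $t\geq3$ we proceed in three steps.

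\emph{Odd $t\geq3$.} Since $-1$ is not a power of $\zeta$, we have $U_t=0$. A nonzero $V_t$ forces $\gcd(t,k+1)=1$, and then the admissible $j$ lie in a single residue class mod $t$. Hence $V_t\leq\lceil M/3\rceil$, which is at most $M-1$ for $M\geq2$ and at most $N-1$ when $M=N\geq2$. If $M=1$ then $N=M=1$, which is excluded.

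\emph{Even $t\geq4$, restricting the solution sets.} The condition $\zeta^{(k+1)i-1}=-1$ forces $t\mid 2((k+1)i-1)$, so $\gcd(t,k+1)\in\{1,2\}$.
\begin{itemize}
\item If the gcd is $2$, then $V_t=0$, because $(k+1)j+1$ is odd while $t$ is even. The admissible $i$ form one class mod $t/2$, so $U_t\leq\lceil 2N/t\rceil\leq\lceil N/2\rceil$. This is $\leq N-1$ for $N\geq2$, and when $N=1$ it gives $1\leq M-1$ whenever $M>N$.
\item If the gcd is $1$, the admissible $i$ form a set $I$ in one residue class mod $t$, and the admissible $j$ form a set $J$ in one residue class mod $t$.
\end{itemize}
If only one of $I,J$ is nonempty, the bound $\lceil M/4\rceil$ or $\lceil N/4\rceil$ suffices as before.

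\emph{Both $I$ and $J$ nonempty.} This is the main step, where the numerator must be used. If $i\in I$ and $j\in J$, then
\[
\zeta^{(k+1)(i+j)}=\zeta^{(k+1)i-1}\,\zeta^{(k+1)j+1}=-1,
\]
and $1\leq i+j\leq N+M-1$, so every element of the sumset $I+J$ is an index counted by $W_t$. Since $I$ and $J$ are arithmetic progressions with common difference $t$, the sumset $I+J$ has exactly $|I|+|J|-1$ elements. Hence $W_t\geq U_t+V_t-1$, and the pole order is at most $1$. This is at most $M-1$ and at most $N-1$ in our cases, because $(N,M)\neq(1,1)$; in case (iii) $N=M\geq2$ then.

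The points needing care are the residue-class bookkeeping in the gcd-$2$ subcase and confirming the parity count at $-1$ for even $k$. The sumset step is the key idea that produces the required cancellation.
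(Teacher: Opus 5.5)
Your proof is correct and follows essentially the same route as the paper: each of $U_t$ and $V_t$ is bounded by counting a single residue class, and when both $I$ and $J$ are nonempty the sumset of the two progressions with difference $t$ gives $W_t\geq U_t+V_t-1$. The differences are only organizational. You split by $t$ rather than by $h=\gcd(k+1,t)$, and you handle $x=-1$ by explicit parity counts, which shows there is no pole at all when $k$ is even. You also observe that $U_t>0$ already forces $\gcd(t,k+1)\mid 2$, which collapses the paper's general $L=t/h$ analysis to the single case $\gcd=2$. One small slip: the remark about (ii) for odd $k$ sits under the even-$k$ bullet, though its content is right.
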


\begin{proof}
At $x=1$, all $M$ factors $1-x^{(k+1)j+1}$ vanish, while every factor of the form $1+x^{(k+1)i-1}$ or $1+x^{(k+1)r}$ equals $2$. Thus $x=1$ has pole order exactly $M$. Now let $t>1$ and $h:=\gcd(k+1,t)$. We split the argument into two cases.

\noindent\textbf{Case 1: $h=1$.} The condition counted by $V_t$ is
\[
(k+1)j\equiv-1\pmod t.
\]
Since $k+1$ is invertible modulo $t$, this selects one residue class modulo $t$. Consequently, we have
\begin{equation}\label{eq:V-bound}
V_t\leq\left\lceil\frac{M}{t}\right\rceil.
\end{equation}
We distinguish between two subcases according to the parity of $t$.
\begin{description}
\item[Case 1-1] If $t$ is odd, then no power of the primitive $t$th root $\zeta$ equals $-1$. Therefore, $U_t=W_t=0$. Since $t\geq3$ and $M\geq2$, except in the excluded case $(N,M)=(1,1)$, we have
\[
U_t+V_t-W_t
=V_t
\leq
\left\lceil\frac{M}{t}\right\rceil
\leq M-1.
\]

\item[Case 1-2] If $t$ is even, then the conditions counted by $U_t$ and $W_t$ are
\[
(k+1)i\equiv1+t/2\pmod t,\text{ and }(k+1)r\equiv t/2\pmod t,
\]
respectively. Define
\begin{align*}
I&:=\{1\leq i\leq N:(k+1)i\equiv1+t/2\pmod t\},\\
J&:=\{0\leq j\leq M-1:(k+1)j\equiv-1\pmod t\}.
\end{align*}
Thus $|I|=U_t$ and $|J|=V_t$. If $i\in I$ and $j\in J$, then
\[
(k+1)(i+j)\equiv t/2\pmod t.
\]
Also,
\[
1\leq i+j\leq N+M-1,
\]
so $i+j$ is counted by $W_t$.

\begin{itemize}
\item If both $I$ and $J$ are nonempty, they are arithmetic progressions with common difference $t$. Their sumset $\{i+j: i\in I\text{ and }j\in J\}$ contains at least $|I|+|J|-1=U_t+V_t-1$ distinct integers. Every one is counted by $W_t$. Hence, $W_t\geq U_t+V_t-1$, and therefore $U_t+V_t-W_t\leq1$. Since $M\geq2$, this is at most $M-1$; in the diagonal case $N=M\geq2$, it is at most $N-1$.

\item If $I$ is empty, then
\[
U_t+V_t-W_t\leq V_t
\leq\left\lceil\frac{M}{t}\right\rceil
\leq M-1.
\]

\item If $J$ is empty, the same residue-class argument gives
\[
U_t+V_t-W_t\leq U_t
\leq\left\lceil\frac{N}{t}\right\rceil.
\]
This is at most $N-1$ when $N\geq2$, and at most $M-1$ when $N=1<M$.
\end{itemize}
\end{description}

\noindent\textbf{Case 2: $h>1$.} The congruence
\[
(k+1)j\equiv-1\pmod t
\]
has no solution because the left side is divisible by $h$ and the right side is not. Thus, we have $V_t=0$.  We distinguish between two subcases according to the parity of $t$.

\begin{description}
\item[Case 2-1] If $t$ is odd, then $U_t=W_t=0$ as before, and there is no pole.

\item[Case 2-2] If $t$ is even, then the congruence defining $U_t$ can be solvable only if $h\mid1+t/2$, whereas the congruence defining $W_t$ can be solvable only if $h\mid t/2$. These two conditions cannot both hold, since their difference is $1$.

If the second condition holds, then $U_t=0$ and the quantity \eqref{eq:pole-order-count} is non-positive. If neither holds, there is no pole. We are left with $h\mid1+t/2$. After division by $h$, the solutions counted by $U_t$ form one residue class modulo $L:=t/h$. 

\begin{itemize}
\item If $L\geq2$, then we have
\[
U_t\leq\left\lceil\frac{N}{L}\right\rceil
\leq\left\lceil\frac{N}{2}\right\rceil.
\]
For $N\geq2$, this is at most $N-1$. If $N=1$, the excluded case forces $M\geq2$, and $U_t\leq1\leq M-1$.

\item It remains to consider $L=1$, then we have $t=h$ and $t\mid (k+1)$. The condition $h\mid1+t/2$ becomes $t\mid1+t/2$.
Since $t$ is even, this forces $t=2$. Thus the only exceptional root is $x=-1$, and it can occur only when $k$ is odd.

For odd $k$, every exponent $(k+1)i-1$ is odd, so all $N$ signed denominator factors vanish at $-1$. Every exponent $(k+1)j+1$ is odd, so the ordinary denominator factors equal $2$ there. Every numerator exponent $(k+1)r$ is even, so the numerator factors also equal $2$. Hence $-1$ is a pole of order exactly $N$.
\end{itemize}
\end{description}
If $M>N$, this order is at most $M-1$. If $M=N$, it is exactly the second possible maximal pole order. This completes the proof.
\end{proof}

\subsection{Leading constants}

We define
\[
A^+_{k,M}
:=
\frac{2^{M-1}}
{\prod_{j=0}^{M-1}((k+1)j+1)}.
\]
And for odd $k$, we define
\[
A^-_{k,N}
:=
\frac{2^{N-1}}
{\prod_{i=1}^{N}((k+1)i-1)}.
\]

\begin{lemma}\label{lem:pole-constants}
We have
\[
\lim_{x\to1}(1-x)^M G_{k,N,M}(x)
=A^+_{k,M}.
\]
If $M=N$ and $k$ is odd, then
\[
\lim_{x\to-1}(1+x)^N D_{k,N}(x)
=A^-_{k,N}.
\]
\end{lemma}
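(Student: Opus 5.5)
The plan is to evaluate both limits factor by factor from the expanded form \eqref{eq:G-expanded}. Lemma~\ref{lem:maximal-poles} identifies exactly which factors vanish at $x=\pm1$. So each limit splits into a product of finitely many elementary limits $\lim (1\mp x^e)/(1\mp x)$ and the values of the non-vanishing factors at the point.

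\textbf{At $x=1$.} Write
\[
(1-x)^M G_{k,N,M}(x)=\frac{\prod_{r=1}^{N+M-1}(1+x^{(k+1)r})}{\prod_{i=1}^{N}(1+x^{(k+1)i-1})}\prod_{j=0}^{M-1}\frac{1-x}{1-x^{(k+1)j+1}}.
\]
\begin{itemize}
\item Each quotient $(1-x)/(1-x^{e})=1/(1+x+\cdots+x^{e-1})$ tends to $1/e$ with $e=(k+1)j+1$, giving the factor $1/\prod_{j=0}^{M-1}((k+1)j+1)$.
\item Each numerator factor tends to $2$, contributing $2^{N+M-1}$.
\item Each signed denominator factor tends to $2$, contributing $2^{-N}$.
\end{itemize}
The net power of two is $2^{M-1}$, so the limit is $A^+_{k,M}$. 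This is just continuity of a product of rational functions that are regular and nonzero at $1$.

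\textbf{At $x=-1$, with $M=N$ and $k$ odd.} Write
\[
(1+x)^N D_{k,N}(x)=\frac{\prod_{r=1}^{2N-1}(1+x^{(k+1)r})}{\prod_{j=0}^{N-1}(1-x^{(k+1)j+1})}\prod_{i=1}^{N}\frac{1+x}{1+x^{(k+1)i-1}}.
\]
Since $k+1$ is even, the exponents behave as follows.
\begin{itemize}
\item $(k+1)r$ is even, so each of the $2N-1$ numerator factors tends to $2$.
\item $(k+1)j+1$ is odd, so each of the $N$ ordinary denominator factors tends to $1-(-1)=2$.
\item For odd $e=(k+1)i-1$, we have $(1+x^e)/(1+x)=1-x+x^2-\cdots+x^{e-1}$, which equals $e$ at $x=-1$ (all $e$ terms equal $1$). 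So each quotient tends to $1/((k+1)i-1)$.
\end{itemize}
Collecting these gives $2^{2N-1}\cdot 2^{-N}/\prod_{i=1}^{N}((k+1)i-1)=A^-_{k,N}$.

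The only point needing care is the sign and parity bookkeeping at $x=-1$, namely checking that each exponent has the claimed parity. Lemma~\ref{lem:maximal-poles} already recorded these parities, so I expect no real obstacle. The remaining work is to confirm that the powers of two combine to $2^{M-1}$ at $x=1$ and to $2^{N-1}$ at $x=-1$.
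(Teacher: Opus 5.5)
Your proposal is correct and matches the paper's proof: both evaluate the limits factor by factor from \eqref{eq:G-expanded}. Each uses $\lim(1\mp x^e)/(1\mp x)=e$ (with $e$ odd at $x=-1$) on the vanishing factors and collects the powers of $2$ from the remaining factors to get $A^+_{k,M}$ and $A^-_{k,N}$.
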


\begin{proof}
For $e\geq1$, we have
\[
\lim_{x\to1}\frac{1-x^e}{1-x}=e.
\]
At $x=1$, the $N+M-1$ numerator factors of $G_{k, N, M}(x)$ contribute $2^{N+M-1}$, while the $N$ signed denominator factors contribute $2^N$. The remaining ordinary factors contribute
\[
\prod_{j=0}^{M-1}((k+1)j+1)
\]
after multiplication by $(1-x)^M$. This proves the first formula.

Now suppose $M=N$ and $k$ is odd. At $x=-1$, all $2N-1$ numerator factors tend to $2$, and all $N$ ordinary denominator factors tend to $2$. For odd $e$, we have
\[
\lim_{x\to-1}\frac{1+x^e}{1+x}=e.
\]
The signed denominator exponents are $(k+1)i-1$. Hence
\[
\lim_{x\to-1}(1+x)^N D_{k,N}(x)
=
\frac{2^{2N-1}}
{2^N\prod_{i=1}^{N}((k+1)i-1)}
=A^-_{k,N}.
\]
This complete the proof.
\end{proof}

\subsection{Complete proof and remarks}

\begin{proof}[Proof of Theorem~\ref{thm:quasipoly-intro}]
The existence and exact onset of the quasipolynomial representation follow from Lemma~\ref{lem:root-unity-pf} and the degree calculation \eqref{eq:degree-difference}. The period divides $\Lambda_{k,N,M}$ because every pole is a $\Lambda_{k,N,M}$th root of unity.

\begin{itemize}
\item Suppose that $M>N$. By Lemma~\ref{lem:maximal-poles}, $x=1$ is the unique pole of order $M$. Lemma~\ref{lem:pole-constants} shows that its leading partial-fraction term is
\[
\frac{A^+_{k,M}}{(1-x)^M}.
\]
Since
\[
[x^n](1-x)^{-M}
=
\binom{n+M-1}{M-1}
=
\frac{n^{M-1}}{(M-1)!}+O(n^{M-2}),
\]
every residue-class polynomial has leading coefficient
\[
\frac{A^+_{k,M}}{(M-1)!}
=
\frac{2^{M-1}}
{(M-1)!\prod_{j=0}^{M-1}((k+1)j+1)},
\]
which is positive.

\item Suppose that $M=N\geq2$ and $k$ is even. Again, $x=1$ is the unique pole of maximal order, now equal to $N$. The same argument gives the stated leading coefficient
\[
\frac{A^+_{k,N}}{(N-1)!}=\frac{2^{N-1}}
{(N-1)!\prod_{j=0}^{N-1}((k+1)j+1)}.
\]

\item Suppose that $M=N\geq2$ and $k>1$ is odd. The only poles of order $N$ are $1$ and $-1$. Their leading terms are
\[
\frac{A^+_{k,N}}{(1-x)^N}
\qquad\text{and}\qquad
\frac{A^-_{k,N}}{(1+x)^N}.
\]
Because
\[
[x^n](1+x)^{-N}
=(-1)^n\binom{n+N-1}{N-1},
\]
the leading coefficient in parity class $r$ is
\[
\frac{A^+_{k,N}+(-1)^rA^-_{k,N}}{(N-1)!}.
\]
We must prove that this is positive for odd as well as even $r$. Indeed, we can rewrite
\[
\prod_{j=0}^{N-1}((k+1)j+1)
=
\prod_{i=1}^{N}\bigl((k+1)(i-1)+1\bigr).
\]
For every $1\leq i\leq N$, we have $(k+1)i-1-\bigl((k+1)(i-1)+1\bigr)=k-1>0$. Therefore, $(k+1)(i-1)+1<(k+1)i-1$ term by term. Hence
\[
\prod_{j=0}^{N-1}((k+1)j+1)
<
\prod_{i=1}^{N}((k+1)i-1),
\]
which implies
\[
A^+_{k,N}>A^-_{k,N}>0.
\]
Thus both $A^+_{k,N}+A^-_{k,N}$ and $A^+_{k,N}-A^-_{k,N}$ are positive.
\end{itemize}
Finally, when $k=1$ on the diagonal, Corollary~\ref{cor:d-two-support} gives the exact conclusion: odd coefficients vanish and even coefficients are positive. The case $N=1$ was proved exactly in Theorem~\ref{thm:N-one-intro}.

Every nonzero residue-class polynomial therefore has positive leading coefficient. It follows that its values are positive for all sufficiently large arguments in that residue class. This completes the proof.
\end{proof}

\begin{corollary}\label{cor:finite-certification}
For every fixed triple $(k,N,M)$ with $N\leq M$, the assertion
\[
G_{k,N,M}(x)\in\bN_0[[x]]
\]
can be decided by a finite exact computation.
\end{corollary}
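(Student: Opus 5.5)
The plan is to use Theorem~\ref{thm:quasipoly-intro} to split the coefficient sequence into a finite initial segment and an eventually quasipolynomial tail, and then to make the point at which the tail becomes positive effective. Fix $(k,N,M)$ with $N\leq M$. If $N=1$ or $k=1$, Theorems~\ref{thm:N-one-intro} and~\ref{thm:equal-finite-intro} already settle the question, so assume $N\geq2$ and $k\geq2$. The first step is computational. Expand \eqref{eq:G-expanded} exactly, either as a power series truncated at any chosen degree or by exact rational partial fractions over a cyclotomic field. This yields $c_{k,N,M}(n)$ for $n\leq \Delta_{k,N,M}+\Lambda_{k,N,M}\cdot M$. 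From these values, fit the polynomials $Q_r$ by Lagrange interpolation: each $Q_r$ has degree at most $M-1$ and agrees with $c_{k,N,M}(n)$ for $n>\Delta_{k,N,M}$ with $n\equiv r\pmod{\Lambda_{k,N,M}}$, so $M$ sample points per residue class suffice. Every step here is exact arithmetic in $\mathbb{Q}$.

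The second step is to bound, for each residue class $r$, how long $Q_r$ can fail to be positive. By Theorem~\ref{thm:quasipoly-intro}, each $Q_r$ has a positive leading coefficient, and this coefficient is explicitly known. I would then apply a Cauchy-type root bound. If $Q_r(X)=\alpha_d X^d+\cdots+\alpha_0$ with $\alpha_d>0$, then $Q_r(n)>0$ for all
\[
n> T_r:=1+\max_{i<d}|\alpha_i|/\alpha_d .
\]
Set $T:=\max\bigl(\Delta_{k,N,M},\max_r T_r\bigr)$. This $T$ is an explicit rational number computed from the interpolated coefficients, so all coefficients of degree above $T$ are strictly positive.

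The third step is a finite check. Compute $c_{k,N,M}(n)$ exactly for $0\leq n\leq T$ and test the sign of each. Then $G_{k,N,M}(x)\in\bN_0[[x]]$ holds if and only if all of these finitely many coefficients are non-negative. By Theorem~\ref{thm:diagonal-intro}, it would even suffice to run the procedure only on the diagonal $D_{k,N}$, but this is not needed.

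The main obstacle is ensuring that each $Q_r$ genuinely has the stated degree $M-1$ with positive leading coefficient, since a Cauchy bound for a polynomial of lower degree would be meaningless. I expect this to be the delicate point. Here I would rely directly on the leading-coefficient computation in Theorem~\ref{thm:quasipoly-intro}, including the parity-class analysis when $M=N$ and $k$ is odd. That analysis shows the top coefficient is nonzero in every class, with the exception $k=1$, which was already handled exactly.
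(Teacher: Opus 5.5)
Your proposal is correct and follows the paper's argument: coefficients through degree $\Delta_{k,N,M}$ are a finite list, the $Q_r$ are recovered exactly by interpolation in each residue class modulo $\Lambda_{k,N,M}$, and the positive leading coefficients from Theorem~\ref{thm:quasipoly-intro} give a sign bound for each $Q_r$ (your Cauchy bound makes concrete the paper's appeal to real-root isolation or sign analysis). The step you flag as delicate is not needed for decidability, since the leading coefficient of each interpolated $Q_r$ and a Cauchy bound in its actual degree already fix its sign for large $n$; Theorem~\ref{thm:quasipoly-intro} only adds that the tail can never produce a negative answer.
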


\begin{proof}
The coefficients through degree $\Delta_{k,N,M}$ form a finite list. Beyond that degree, the sequence is represented by finitely many rational polynomials $Q_r$. These polynomials can be recovered exactly, for example by partial fractions or by interpolation from sufficiently many exact coefficients in each residue class. Theorem~\ref{thm:quasipoly-intro} shows that each nonzero constituent has positive leading coefficient, so each can have only finitely many negative values on nonnegative integers. Exact real-root isolation or direct sign analysis of the finitely many polynomials therefore supplies a finite verification bound.
\end{proof}

\begin{remark}
Corollary~\ref{cor:finite-certification} is a pointwise result. It does not provide a uniform verification bound valid simultaneously for all $k,N,M$, and therefore it does not prove Conjecture~\ref{conj:finite}.
\end{remark}

\section{Proofs of Theorems \ref{thm:shift-aligned-intro} and \ref{thm:infinite-initial-intro}}\label{sec:infinite}

\subsection{Shift-aligned residue classes}

\begin{proof}[Proof of Theorem~\ref{thm:shift-aligned-intro}]
We first assume that $a=b+tm$ for an integer $t\geq0$. The elementary splitting identity $(z;q)_\infty
=(z;q)_t(zq^t;q)_\infty$ and the merging identity $(z; q)_{\infty}(-z; q)_{\infty}=(z^2; q^2)_{\infty}$ give
\begin{align*}
(q^b;q^m)_\infty
=
(q^b;q^m)_t(q^{b+tm};q^m)_\infty=
(q^b;q^m)_t(q^a;q^m)_\infty,
\end{align*}
and 
\begin{align*}
(-q^a;q^m)_\infty(q^a;q^m)_\infty
=(q^{2a};q^{2m})_\infty,
\end{align*}
respectively. Substituting these two identities into \eqref{eq:F-definition} yields
\begin{align*}
F_{a,b,m}(q)
&=
\frac{(-q^{a+b};q^m)_\infty}
{(q^b;q^m)_t
 (-q^a;q^m)_\infty
 (q^a;q^m)_\infty}\\
 &=\frac{(-q^{a+b};q^m)_\infty}
{(q^b;q^m)_t(q^{2a};q^{2m})_\infty},
\end{align*}
which is the claimed form. The numerator is a product of factors $1+q^r$, each of which has non-negative coefficients. The reciprocal of each denominator factor $1-q^s$ is a geometric series with non-negative coefficients. Therefore the complete product lies in $\bN_0[[q]]$.
\end{proof}

\begin{corollary}\label{cor:shift-subfamilies}
Theorem~\ref{thm:shift-aligned-intro} contains each of the following cases:
\begin{itemize}
\item[(i).] $a=b$, for arbitrary $m$;
\item[(ii).] $m\mid b\mid a$;
\item[(iii).] more generally, every case of Conjecture~\ref{conj:infinite} for which $m\mid(a-b)$.
\end{itemize}
\end{corollary}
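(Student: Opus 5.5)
Each of the three items reduces to checking the hypothesis of Theorem~\ref{thm:shift-aligned-intro}, namely $a\geq b$ and $a\equiv b\pmod m$, so that the factorization \eqref{eq:shift-factor-intro} applies directly.

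For (i), take $a=b$. Then $a\geq b$ holds trivially, and $a-b=0$ is divisible by every $m$. Thus $t=0$, and $(q^b;q^m)_0=1$ in \eqref{eq:shift-factor-intro}. The formula becomes $(-q^{2a};q^m)_\infty/(q^{2a};q^{2m})_\infty$, which recovers the known case of Burson and Eichhorn.

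For (ii), suppose $m\mid b$ and $b\mid a$. Then $b\leq a$, and since $m\mid b$ and $m\mid a$ (by transitivity), we get $m\mid(a-b)$. Hence $a\equiv b\pmod m$, and Theorem~\ref{thm:shift-aligned-intro} applies with $t=(a-b)/m$.

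For (iii), the hypothesis $b\mid a$ of Conjecture~\ref{conj:infinite} gives $a\geq b$ because $a,b\in\bN$. The extra assumption $m\mid(a-b)$ is exactly the congruence $a\equiv b\pmod m$. Theorem~\ref{thm:shift-aligned-intro} therefore yields $F_{a,b,m}(q)\in\bN_0[[q]]$. Items (i) and (ii) are in fact special cases of (iii).

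None of these steps presents a real obstacle. The only point needing care is the inequality $a\geq b$, which follows from the divisibility $b\mid a$ between positive integers, and which in (ii) is needed so that $t=(a-b)/m$ is non-negative.
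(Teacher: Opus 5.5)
Your proposal is correct and takes essentially the same approach as the paper: each item reduces to checking that $m\mid(a-b)$, so that Theorem~\ref{thm:shift-aligned-intro} applies. You also make explicit the side condition $a\geq b$, which follows from $b\mid a$ with $a,b\in\bN$ and which the paper leaves implicit.
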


\begin{proof}
If $a=b$, then $a-b=0$ is divisible by every positive $m$. If $m\mid b\mid a$, then $m$ divides both $a$ and $b$, hence divides $a-b$. The third assertion is exactly the theorem under the conjecture's assumptions.
\end{proof}

\begin{remark}
Factorization \eqref{eq:shift-factor-intro} gives a direct colored-partition interpretation. The numerator permits distinct parts in the progression
\[
a+b,a+b+m,a+b+2m,\ldots.
\]
The finite reciprocal $(q^b;q^m)_t^{-1}$ permits unrestricted parts
\[
b,b+m,\ldots,b+(t-1)m,
\]
and $(q^{2a};q^{2m})_\infty^{-1}$ permits unrestricted parts congruent to $2a$ modulo $2m$. If two listed types have the same numerical size, they are naturally regarded as different colors.
\end{remark}

\subsection{Improving the initial non-negative range}

We assume now that $b\mid a$, and write $a=kb$. Moreover, we set
\[
A_1=a+m,
\qquad
A_2=a+2m,
\qquad
B_1=b+m,
\qquad
C_1=a+b+m.
\]
Separate the zeroth factors in \eqref{eq:F-definition}:
\begin{equation}\label{eq:infinite-separated}
F_{a,b,m}(q)
=
D_{k, 1}(q^b)H(q)
\prod_{j=1}^{\infty}\frac{1}{1+q^{a+jm}},
\end{equation}
where
\begin{equation}\label{eq:infinite-positive-H}
H(q)
:=
\frac{(-q^{a+b+m};q^m)_\infty}
{(q^{b+m};q^m)_\infty}.
\end{equation}
Both $D_{k, 1}(q^b)$ and $H(q)$ have non-negative coefficients.

\begin{proof}[Proof of Theorem~\ref{thm:infinite-initial-intro}]
We write
\[
D_{k, 1}(q^b)H(q)=\sum_{n\geq0}h'(n)q^n.
\]
The signed tail in \eqref{eq:infinite-separated} satisfies
\[
\prod_{j=1}^{\infty}\frac{1}{1+q^{a+jm}}
\equiv
1-q^{A_1}
\pmod{q^{A_2}\bZ[[q]]}.
\]
Indeed, the next signed factor starts at $A_2$, and $2A_1>A_2$. Thus the coefficients below $A_1$ are non-negative. For
\[
n=A_1+t,
\qquad
0\leq t<m,
\]
we have
\begin{equation}\label{eq:infinite-coefficient-difference}
[q^n]F_{a,b,m}(q)
=h'(A_1+t)-h'(t).
\end{equation}
All non-constant terms of $H(q)$ have degree at least $B_1=b+m>m$. Therefore, for $0\leq t<m$, we have
\[
h'(t)=[q^t]D_{k, 1}(q^b).
\]
If $b\nmid t$, this coefficient is zero, so \eqref{eq:infinite-coefficient-difference} is non-negative. Let $t=rb$, then
\[
h'(t)=\ell_k(r).
\]
We again construct two disjoint contributions to $h'(A_1+rb)$.

\begin{itemize}
\item First, select one copy of $q^{B_1}$ from the geometric factor $(1-q^{B_1})^{-1}$ in $H(q)$, select constants from all other factors in $H(q)$, and select degree
\[
A_1+rb-B_1
=(r+k-1)b
\]
from $D_{k, 1}(q^b)$. This contributes $\ell_k(r+k-1)$.

\item Second, select the numerator term $q^{C_1}$ from the factor $1+q^{C_1}$, select constants from all other factors in $H(q)$, and select degree
\[
A_1+rb-C_1
=(r-1)b
\]
from $D_{k, 1}(q^b)$. This contributes $\ell_k(r-1)$.
\end{itemize}
The two classes are disjoint. Hence
\[
h'(A_1+rb)
\geq
\ell_k(r+k-1)+\ell_k(r-1).
\]
By Lemma~\ref{lem:ell-values},
\[
h'(A_1+rb)
\geq
\ell_k(r)
=h'(rb).
\]
Equation \eqref{eq:infinite-coefficient-difference} is therefore non-negative for every $0\leq t<m$. Combining this with the range below $A_1$ proves
\[
[q^n]F_{a,b,m}(q)\geq0
\qquad(0\leq n<A_2=a+2m).
\]
\end{proof}

\begin{remark}
At degree $a+2m$, a second negative signed factor becomes active. The comparison used above then contains two shifted negative contributions, as well as possible positive contributions from repeated use of the first signed factor. Controlling these terms uniformly appears to require a genuinely new injection or a more global product transformation.
\end{remark}

\section{Concluding remarks}\label{sec:conclusion}

Now we record the precise coefficientwise limit of the balanced finite products.

\begin{theorem}\label{thm:limit-intro}
If $b\mid a$ and $m=a+b$, then
\[
\lim_{N\to\infty}g_{a,b,m}(N,N;q)
=
F_{a,b,m}(q)
\]
coefficientwise in $\bZ[[q]]$. Thus Conjecture~\ref{conj:finite}, if proved, would imply the $m=a+b$ subfamily of Conjecture~\ref{conj:infinite}.
\end{theorem}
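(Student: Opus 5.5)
We prove the limit by checking that, for every fixed degree $n$, the coefficient $[q^n]g_{a,b,m}(N,N;q)$ does not depend on $N$ once $N$ is large, and that this stable value equals $[q^n]F_{a,b,m}(q)$. We write out both products factor by factor, with $m=a+b$. From \eqref{eq:g-definition},
\[
g_{a,b,m}(N,N;q)=\frac{\prod_{r=1}^{2N-1}(1+q^{rm})}{\prod_{j=0}^{N-1}(1+q^{a+jm})\prod_{j=0}^{N-1}(1-q^{b+jm})}.
\]
Since $a+b=m$, the numerator of $F_{a,b,m}$ is
\[
(-q^{a+b};q^m)_\infty=\prod_{r\geq1}(1+q^{rm}),
\]
and its two denominators are $\prod_{j\geq0}(1+q^{a+jm})$ and $\prod_{j\geq0}(1-q^{b+jm})$.

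The first step is to express the quotient exactly:
\[
\frac{F_{a,b,m}(q)}{g_{a,b,m}(N,N;q)}=\frac{\prod_{r\geq2N}(1+q^{rm})}{\prod_{j\geq N}(1+q^{a+jm})(1-q^{b+jm})}.
\]
Each factor on the right is $1$ plus a power series in which every term has degree at least $\min(2Nm,\,a+Nm,\,b+Nm)\geq Nm$. Each factor is invertible in $\bZ[[q]]$, and the infinite products converge formally because the minimal degrees tend to infinity. Hence the quotient satisfies
\[
\frac{F_{a,b,m}(q)}{g_{a,b,m}(N,N;q)}\equiv 1\pmod{q^{Nm}\bZ[[q]]}.
\]

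The second step multiplies through. Both $g_{a,b,m}(N,N;q)$ and $F_{a,b,m}(q)$ lie in $\bZ[[q]]$, with constant term $1$. The congruence above therefore gives
\[
F_{a,b,m}(q)-g_{a,b,m}(N,N;q)\in q^{Nm}\bZ[[q]].
\]
So for each fixed $n$, the coefficients agree as soon as $Nm>n$, which is exactly coefficientwise convergence.

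For the consequence, a coefficientwise limit of series in $\bN_0[[q]]$ again lies in $\bN_0[[q]]$. The hypotheses $b\mid a$ and $a+b=m$ are those of Conjecture~\ref{conj:finite}, and the diagonal choice $N=M$ is allowed there. So if Conjecture~\ref{conj:finite} holds, every $g_{a,b,m}(N,N;q)$ is non-negative, and hence so is $F_{a,b,m}(q)$. This gives the $m=a+b$ subfamily of Conjecture~\ref{conj:infinite}.

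The only point requiring care is the formal convergence of the infinite tail products and the minimal-degree estimate $\min(2Nm,\,a+Nm,\,b+Nm)\geq Nm$, which holds trivially. I therefore expect no real obstacle; the argument is routine truncation bookkeeping.
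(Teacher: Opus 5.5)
Your proof is correct and uses essentially the same truncation argument as the paper: the factors in your tail quotient $F_{a,b,m}(q)/g_{a,b,m}(N,N;q)$ are exactly the ones the paper notes are $\equiv 1$ modulo $x^{L+1}$ once $N$ is large. The difference is that you compare directly with $F_{a,b,m}(q)$ instead of first stabilizing the coefficients and then passing to the limit; this gives the explicit bound $Nm>n$ and shows that $b\mid a$, which the paper uses only for the substitution $x=q^b$, is not needed for the limit itself.
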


\begin{proof}
We first assume that $m=a+b$, write $a=kb$, and set $x=q^b$. Then
\[
g_{a,b,m}(N,N;q)
=
\frac{(-x^{k+1};x^{k+1})_{2N-1}}
{(-x^{k};x^{k+1})_N(x;x^{k+1})_N}.
\]
Fix a non-negative integer $L$. Every factor whose first non-constant term has degree greater than $L$ is congruent to $1$ modulo $x^{L+1}$. Therefore, once $N$ is sufficiently large relative to $L$, increasing $N$ does not alter any coefficient of degree at most $L$. This is coefficientwise convergence.

Passing to the limit gives
\[
\lim_{N\to\infty}g_{a,b,m}(N,N;q)
=
\frac{(-x^{k+1};x^{k+1})_\infty}
{(-x^{k};x^{k+1})_\infty(x;x^{k+1})_\infty}.
\]
Since
\[
x^{k+1}=q^m,
\qquad
x^{k}=q^a,
\qquad
x=q^b,
\]
the right side equals
\[
\frac{(-q^m;q^m)_\infty}
{(-q^a;q^m)_\infty(q^b;q^m)_\infty}.
\]
Because $m=a+b$, this is precisely $F_{a,b,m}(q)$.

A coefficientwise limit of series in $\bN_0[[q]]$ has non-negative coefficients. Therefore a proof of Conjecture~\ref{conj:finite} would imply the balanced $m=a+b$ case of Conjecture~\ref{conj:infinite}.
\end{proof}


Finally, we turn to the remaining problem concerning the non-negativity of infinite and finite products.

\subsection{Remaining finite problem}

The diagonal reduction leaves the assertion
\[
D_{k,N}(x)
=
\frac{\prod_{r=1}^{2N-1}(1+x^{(k+1)r})}
{\prod_{i=1}^{N}(1+x^{(k+1)i-1})
 \prod_{j=0}^{N-1}(1-x^{(k+1)j+1})}
\in\bN_0[[x]].
\]
The local decomposition proves that, for $k>1$ and $N\geq2$, at least one local factor has a negative coefficient. Any complete proof must therefore use cancellation between blocks, a sign-reversing involution, or a global analytic identity.


\subsection{Remaining infinite problem}

Theorem~\ref{thm:shift-aligned-intro} settles every case in which the signed and ordinary denominator progressions eventually coincide. Outside this alignment, the local factors contain genuine negative coefficients. Theorem~\ref{thm:infinite-initial-intro} controls the first such negative shift, but the next interval involves several interacting signed choices. A successful proof of Conjecture~\ref{conj:infinite} will likely require either a global copartition injection or an analytic treatment of all relevant roots of unity, analogous in spirit to Craig's circle-method analysis \cite{Craig} but uniform in $a,b,m$.

\section{Declarations}
\subsection{Conflict of interest statement} On behalf of all authors, the corresponding author states that there is no conflict of interest.

\subsection{Data availability} No data was used for the research described in the article.

\subsection{Funding statement} This work received no specific grant from any funding agency in the public, commercial, or not-for-profit sectors.



\end{document}